\newdimen\myMargin
\newcommand{\Aalg}{{\cal A}}
\newcommand{\A}{{\cal A}}
\def\B{{\cal B}}
\def\tr{{\mathrm{tr\,}}}
\def\Rad{{ Rad}}
\newcommand{\K}{\mathbb K}
\newcommand{\Q}{\mathbb Q}
\newcommand{\F}{\mathbb F}
\newcommand{\Z}{\mathbb Z}
\newcommand{\ve}[1]{\mathbf{#1}}
\newtheorem{theorem}{Theorem}
\newtheorem{lemma}[theorem]{Lemma}
\newtheorem{corollary}[theorem]{Corollary}
\newtheorem{proposition}[theorem]{Proposition}
\newtheorem{definition}[theorem]{Definition}
\theoremstyle{remark}
\theoremstyle{remark}\newtheorem{remark}[theorem]{Remark}
\title{Computing explicit isomorphisms with full
matrix algebras over $\F_q(x)$}
\author{\normalsize
 \begin{minipage}{0.3\linewidth}
    \large
    G\'abor Ivanyos \\
    \footnotesize
Institute for Computer Science and Control,
Hungarian Acad. Sci. \\
    \texttt{Gabor.Ivanyos@sztaki.mta.hu} \\
    \normalsize
  \end{minipage}
  \qquad
 \begin{minipage}{0.3\linewidth}
    \large
    P\'eter Kutas \\
    \footnotesize
    Central European University, Department of Mathematics and its Applications \\
    \texttt{Kutas\_Peter@phd.ceu.edu} \\
    \normalsize
  \end{minipage}
  \qquad
  \begin{minipage}{0.3\linewidth}
    \large
    Lajos R\'onyai \\
    \footnotesize
     Institute for Computer Science and Control,
     Hungarian Acad. Sci. \\
    Dept. of Algebra, Budapest
    Univ. of Technology and Economics \\
    \texttt{lajos@ilab.sztaki.hu}
    \normalsize
  \end{minipage}
}
\begin{document}

\maketitle

\begin{abstract}
We propose a polynomial time $f$-algorithm
(a deterministic algorithm which uses an oracle for
factoring univariate polynomials over $\F_q$)
for computing an isomorphism (if there is any) of a finite dimensional
 $\F_q(x)$-algebra $\A$  given by structure constants with
the algebra of $n$ by $n$ matrices with entries
from $\F_q(x)$. The method is based on computing
a finite $\F_q$-subalgebra of $\A$ which is the intersection
of a maximal $\F_q[x]$-order and a maximal $R$-order,
where $R$ is the subring of $\F_q(x)$ consisting
of fractions of polynomials with denominator having
degree not less than that of the numerator.
\end{abstract}

\bigskip
\noindent
{\textbf{ Keywords}:} Explicit isomorphism, Function field,
Lattice basis reduction, Maximal order, Full matrix algebra,
Polynomial time algorithm.

\bigskip
\noindent
{\textbf{Mathematics Subject Classification:} 68W30, 16Z05, 16M10

\section{Introduction}

Decomposing finite dimensional associative algebras
over a field $\K$ include the tasks of isolating the
radical, computing simple components of the radical-free
part and finding minimal one-sided ideals within these
simple components. In this paper we consider the case
$\K=\F_q(x)$ where $\F_q$ is the finite field having $q$
elements ($q$ is a prime power). Decomposing algebras over $\F_q(x)$ can be applied
for example to factorization problems in certain skew
polynomial rings, see the work \cite{GZ03} of
Giesbrecht and Zhang and the recent paper \cite{GTLN}
by G\'omez-Torrecillas, Lobillo and Navarro. The
first two tasks mentioned above can be accomplished
by the polynomial time f-algorithm proposed in the
work of the first and the third authors with Sz\'ant\'o~\cite{IRSz}.
The third problem, finding minimal one-sided ideals in
simple algebras appears to be more difficult. In this paper
we propose a solution which works in the special case when
the algebra happens to be isomorphic to the full matrix algebra
$M_n(\F_q(x))$.

If $\A$ is a $\K$-algebra isomorphic to $M_n(\K)$, then
finding a minimal left ideal (or, more generally, finding
an irreducible $\A$-module) is equivalent to constructing
an isomorphism $\phi:\A\rightarrow M_n(\K)$. Indeed,
the matrices having possibly nonzero entries only in the first
column form a minimal left ideal in $M_n(\K)$, so the inverse image
under $\phi$ is a minimal left ideal in $\A$. Conversely, if
$M$ is an irreducible (that is, an $n$-dimensional)
$\A$-module,  then the action of $\A$ on $M$ gives an isomorphism
$\A\cong M_n(\K)$. Therefore the task of finding a minimal left ideal
is also known as the
{\em explicit isomorphism problem}.


\medskip
Recall, that for an algebra ${\A}$ over a field $\K$
and a $\K$-basis   $a_1, \ldots ,a_m$ of ${\A}$ over $\K$
the products $a_ia_j$ can be expressed as
linear combinations of the $a_i$:
$$ a_ia_j=\gamma _{ij1}a_1+\gamma _{ij2}a_2+\cdots +\gamma _{ijm}a_m. $$
The elements $\gamma _{ijk}\in \K$ are called structure constants. In
this paper an algebra is considered to be given as a collection of
structure constants.

Here we consider the explicit isomorphism problem for
$\K=\mathbb{F}_q(x)$.
For the case $\K=\F_q$ the polynomial time
f-algorithm given in \cite{Ro1}
by the third author gives a solution.
See also \cite{IKRS} for related deterministic
methods.
Recently the first and the third authors with
Schicho \cite{IRS} found an algorithm
for solving the explicit isomorphism problem in the case of number fields.
Their algorithm is a polynomial time ff-algorithm
(it is allowed to call oracles for factoring polynomials over
finite fields and for factoring integers), assuming that the
degree of the matrix algebra and the degree of the number field
over $\mathbb{Q}$ are bounded. They combined algebraic
techniques with tools from lattice geometry. Some improvements
were given in \cite{ILR}. Their results have various applications, for instance in arithmetic geometry (see \cite{Cremona1}, \cite{Cremona2}, \cite{Cremona3}).



The structure of the paper will be the following. First we develop the
necessary notions concerning polynomial lattices.
In Section~\ref{sec:lattices} we summarize the main tools
for handling lattices over $\F_q[x]$. The orthogonality defect
inequality and the basis reduction algorithm of Lenstra~\cite{Lenstra}
are discussed here. We shall also use extensions by Paulus~\cite{Paulus}.

In the next section we state and prove certain facts about maximal orders
over polynomial rings. Then we use them to construct
maximal $\F_q[x]$ and $\F_q[\frac{1}{x}]$ orders in $\A$.
The algorithms run in polynomial time if one is allowed to
call oracles for factoring univariate polynomials over finite fields
(it is a polynomial f-algorithm).

Let $R$ be the subring of $\mathbb{F}_q(x)$ consisting of those rational
functions where the degree of the
denominator is at least as high as the degree of the numerator.
The main structural result of the paper is that the intersection of a maximal $R$-order and a
maximal $\mathbb{F}_q[x]$-order is a finite dimensional
$\mathbb{F}_q$-algebra which contains a primitive idempotent of $\A$.
This theorem and the resulting algorithms are described in
Section~\ref{sec:findidemp}:
we propose an algorithm to find a primitive idempotent of $\A$.
Finally we arrive at the following theorem:

\begin{theorem}
\label{thm:Main}
Let $\A$ be isomorphic to $M_n(\mathbb{F}_q(x))$, and
given by structure constants.
Then there exists a polynomial
(in $n$ and in the size of the structure constants)
f-algorithm which finds an explicit isomorphism
between $\A$ and $M_n(\F_q(x))$.
\end{theorem}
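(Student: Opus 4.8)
The plan is to convert the structural result of Section~\ref{sec:findidemp} into an algorithm: produce a maximal $\Fq[x]$-order and a maximal $R$-order of $\A$, intersect them to obtain a finite dimensional $\Fq$-algebra, locate inside it a primitive idempotent of $\A$, and read off the isomorphism from the associated minimal left ideal.

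First I would run the f-algorithms of the section on maximal orders to compute a maximal $\Fq[x]$-order $\Lambda_1$ and a maximal $R$-order $\Lambda_2$ of $\A$, each presented by a basis of $n^2$ elements of $\A$ over the respective ground ring; the only ingredient preventing deterministic polynomial time is the univariate factorization oracle over $\Fq$. Next, using the lattice tools of Section~\ref{sec:lattices} --- the orthogonality defect inequality together with the reduction algorithm of Lenstra~\cite{Lenstra} and its extension by Paulus~\cite{Paulus} --- I would compute the intersection $\Lambda := \Lambda_1 \cap \Lambda_2$ as an $\Fq$-vector space. By the main structural theorem $\Lambda$ is a finite dimensional $\Fq$-subalgebra of $\A$ that contains a primitive idempotent of $\A$, and the reduction machinery guarantees an $\Fq$-basis of $\Lambda$ of polynomially bounded size, computed in polynomial time.

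Regarding $\Lambda$ as a finite dimensional algebra over the finite field $\Fq$, I would then apply the Wedderburn-decomposition f-algorithm of~\cite{Ro1} (cf.\ also~\cite{IKRS}) to obtain the simple components of $\Lambda/\Rad(\Lambda)$ and, within each, a complete set of primitive idempotents; lifting idempotents modulo the nilpotent radical $\Rad(\Lambda)$ is a routine polynomial-time step. This yields idempotents $e_1,\dots,e_r \in \Lambda \subseteq \A$. A primitive idempotent of $\A$ lying in $\Lambda$ is automatically primitive in $\Lambda$, hence conjugate by a unit of $\Lambda$ --- thus by a unit of $\A$ --- to one of the $e_i$; since primitivity in $\A$ is conjugation invariant, at least one $e_i$ is primitive in $\A$. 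To single out such an $e_i$ I would compute, over $\Fq(x)$, the image of the map $a \mapsto e_i a e_i$ on $\A$, i.e.\ the subalgebra $e_i\A e_i$, and its dimension over $\Fq(x)$: as $\A \cong M_n(\Fq(x))$, an idempotent $e$ of $\A$ is primitive exactly when $\dim_{\Fq(x)} e\A e = 1$, and this is plain Gaussian elimination over the field $\Fq(x)$.

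Once a primitive idempotent $e$ of $\A$ is in hand, $\A e$ is a minimal left ideal, hence an irreducible $\A$-module of dimension $n$ over $\Fq(x)$; computing an $\Fq(x)$-basis of $\A e$ and expressing the left-multiplication action of a basis of $\A$ on it produces an algebra homomorphism $\phi\colon \A \to M_n(\Fq(x))$, which, $\A$ being simple and $\dim_{\Fq(x)}\A = n^2 = \dim_{\Fq(x)} M_n(\Fq(x))$, is the desired explicit isomorphism. Every step is either linear algebra over $\Fq(x)$, an invocation of one of the cited polynomial-time f-algorithms, or a lattice reduction of polynomially bounded size, so the whole procedure is a polynomial-time f-algorithm. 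I expect the main obstacle to be the complexity control in the middle step rather than the algebraic bookkeeping: one must bound the degrees and the $\Fq$-sizes of the reduced bases of $\Lambda_1$, $\Lambda_2$ and of $\Lambda_1\cap\Lambda_2$, which is precisely where the orthogonality-defect estimates and Paulus's refinement of Lenstra's reduction are needed; correctness of the idempotent extraction rests entirely on the already-established structural theorem that $\Lambda_1\cap\Lambda_2$ meets the set of primitive idempotents of $\A$.
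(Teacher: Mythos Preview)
Your proposal is correct and follows essentially the same route as the paper: compute the two maximal orders (Theorem~\ref{thm:maxord-construct} and Corollary~\ref{cor:revord-construct}), intersect them via lattice reduction to get the finite $\Fq$-algebra $C$ (Lemma~\ref{lem:find-finite-alg}), extract a complete system of primitive idempotents of $C$, and pick one that is primitive in $\A$. The only noteworthy deviation is your justification that some $e_i$ is primitive in $\A$: you argue via Krull--Schmidt that the rank-$1$ idempotent guaranteed by Theorem~\ref{thm:mainstruct} must be conjugate in $C$ to one of the $e_i$, whereas the paper (Lemma~\ref{lem:find-idemp}) gives a direct elementwise argument with $e_ix$ and the idempotent $f\in exC$; both are valid, and your test $\dim_{\Fq(x)} e_i\A e_i=1$ is equivalent to the paper's rank test $\dim_{\Fq(x)} e_i\A=n$.
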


Together with the polynomial time randomized algorithms
of Cantor and Zassenhaus~\cite{CZ} (or, when $q$ is a power of
a prime bounded by a constant, with
the deterministic method of Berlekamp~\cite{Berlekamp}), this gives
a randomized polynomial time solution in general
(and a deterministic polynomial time algorithm for small
characteristic)
for the explicit isomorphism problem
in the special case $\K=\F_q(x)$. We remark that the main ideas
of this paper can be extended to the case $\K=\F(x)$ for various
fields $\F$ of constants. (One just needs efficient methods
for decomposing finite dimensional algebras over $\F$, and
lattice basis reduction over $\F [x]$.) However,
extending our algorithms to finding minimal left ideals in algebras
which are isomorphic to full matrix algebras over finite
{\em extensions} of $\F_q(x)$ looks more difficult.

Our main aim was in this work to show the existence of a polynomial time
f-algorithm for the explicit isomorphism problem over $\F_ q(x)$.
No attempt has been made to optimize
exponents and implied constants in the time bound. Those would require
substantial further work. Our approach, in return, allowed a relatively short
description of the methods and arguments.

\section{Lattices over function fields}
\label{sec:lattices}

Most of our definitions and lemmas come from the seminal
paper~\cite{Lenstra} of A.~K.~Lenstra.
He introduced the notion of reduced basis and found an
algorithm which finds a shortest vector
in polynomial time in lattices over $\F_q[x]$
(he considered sublattices of $\mathbb{F}_q[x]^m$).
Note that the analogous problem is NP-hard in the case of integer
lattices~\cite{Ajtai}.
First we state certain definitions about $\mathbb{F}_q[x]$-lattices
in $\F_q(x)^m$.

\begin{definition}
Let $f,g\in \mathbb{F}_q[x]$.
Then we set $|\frac{f}{g}|=\mathrm{deg} (f)-\mathrm{deg} (g) $.
We will refer to $|.|$ as the {\em valuation} (or {\em degree}) of
an element of $\F_q(x)$. We set $|0|=-\infty$.
Let $\ve v=(v_1,\dots,v_m)^T\in \mathbb{F}_q(x)^m$.
Then the {\em valuation} (or {\em degree}) of the vector $\ve v$ is $|\ve
v|=\max(|v_1|,\dots,|v_m|)$.
\end{definition}

\begin{definition}
$L$ is a {\em full
lattice} in $\F_q(x)^m$
if $L=\{\alpha_1\ve b_1+\dots+\alpha_m\ve b_m|~ \alpha_i\in \mathbb{F}_q[x]\}$
where $\ve b_1,\dots, \ve b_m$
is a basis (over $\mathbb{F}_q(x)$) in $\mathbb{F}_q(x)^m$.
\end{definition}
\begin{definition}
Let $\ve b_1, \ve b_2, \dots, \ve b_m\in \mathbb{F}_q(x)^m$. Then
 the {\em orthogonality defect} $OD(\ve b_1,\dots, \ve b_m)$
is defined as $OD(\ve b_1,\dots, \ve b_m)=\sum_{i=1}^{m} |\ve
b_i|-|\det(B)|$ where $B$ is the matrix whose
columns are the $\ve b_i$, ($i=1,\dots,m$).
\end{definition}

The following lemma is from \cite{Lenstra}.
However, there it is stated in a slightly weaker form than we need it in
this paper. So we state and prove the lemma here as well. The proof is also from \cite{Lenstra}.
\begin{lemma}\label{orthogonality defect}
Let $\ve b_1, \ve b_2, \dots, \ve b_m\in \mathbb{F}_q(x)^m$ be linearly independent and $\ve
a=\sum_{i=1}^{m} \alpha_i\ve b_i $ where $\alpha_i\in \mathbb{F}_q[x]$. Then the following holds for every $i$:
\begin{equation}
|\alpha_i|\leq |\ve a|+OD(\ve b_1,\dots, \ve b_m)-|\ve b_i|
\end{equation}
\end{lemma}
\begin{proof}

Consider the $\alpha_i$ as unknowns. In this case we have $m$
linear equations and $m$ variables so we can use Cramer's rule.
Note that $\{\ve b_i\}_{i=1}^m$ is a basis so the determinant of the
coefficient matrix $B$ is non-zero. By Cramer's rule
$\alpha_i$ is equal to the quotient of 2 determinants. In other words
$\alpha_i$ multiplied by the determinant of the lattice is
equal to the determinant where the $i$th column of $B$ is switched to
$\ve a$. Since these two sides are equal, their valuations are equal also
(on both sides we have elements from $\mathbb{F}_q(x)$).
Note that the valuation of a determinant can be bounded from above by the
sum of the valuations of its columns. To formalize this last sentence:
\begin{eqnarray*}
|\alpha_i|+|\det(B)|
&\leq& |\ve b_1|+|\ve b_2|+\dots +|\ve b_{i-1}|+|\ve b_{i+1}|+\dots +|\ve
b_m|+|\ve a|\\
&=& \sum_{i=1}^{m} |\ve b_i|-|\ve b_i|+|\ve a|.
\end{eqnarray*}
After rearranging we obtain the result.
\end{proof}
 An implication of this lemma is the following. If we have a vector with
small valuation, then the coefficients corresponding to a basis are
also small, if the orthogonality defect of the basis is small.
This also suggests that an ideal basis is one whose orthogonality defect is 0. This motivates the following definition.
\begin{definition}
A basis $\ve b_1, \ve b_2, \dots, \ve b_m\in \mathbb{F}_q(x)^m$ is called
{\em reduced} if $OD(\ve b_1,\dots, \ve b_m)=0$.
\end{definition}

Lenstra proposed a polynomial time method~\cite[Algorithm 1.7]{Lenstra}
to compute reduced bases of sublattices of $\F_q[x]^m$. We quote
\cite[Proposition 1.14]{Lenstra} below.

\begin{proposition}
Let $\ve b_1,\ve b_2,\dots, \ve b_m$ be over $\F_q(x)$ linearly independent
vectors from $\F_q[x]^m$ and let $L$ be the $\F_q[x]$-lattice they
generate. Let $M=\max_{1\leq i\leq m}(|\ve b_i|)$ and let $M'=\max(M,1)$.
Then there exists an algorithm which takes
$O(m^3M'(OD(\ve b_1,\dots, \ve b_m)+1))$
arithmetic operations in $\mathbb{F}_q$
and returns a reduced basis $\ve c_1,\dots,\ve c_m$ of $L$
for which we have
$|\ve c_i|\leq M$ ($i=1,\ldots,m$).
\end{proposition}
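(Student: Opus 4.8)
The plan is to reconstruct and analyze Lenstra's reduction procedure. For a nonzero $\ve v\in\F_q(x)^m$ with $d=|\ve v|$, I write $\mathrm{lc}(\ve v)\in\F_q^m$ for the vector of coefficients of $x^{d}$ in the coordinates of $\ve v$, so $\mathrm{lc}(\ve v)\neq\ve{0}$. The structural heart of the argument will be the identity that, with $d_i=|\ve b_i|$, the coefficient of $x^{\,d_1+\cdots+d_m}$ in $\det B$ equals the determinant of the matrix whose columns are $\mathrm{lc}(\ve b_1),\dots,\mathrm{lc}(\ve b_m)$; this drops out of the Leibniz expansion, since each term $\mathrm{sgn}(\sigma)\prod_i (b_i)_{\sigma(i)}$ has valuation at most $\sum_i d_i$ and contributes $\mathrm{sgn}(\sigma)\prod_i (\mathrm{lc}(\ve b_i))_{\sigma(i)}$ to degree $\sum_i d_i$. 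Combined with the general bound $|\det B|\le\sum_i d_i$, this shows $OD(\ve b_1,\dots,\ve b_m)=0$ exactly when $\mathrm{lc}(\ve b_1),\dots,\mathrm{lc}(\ve b_m)$ are $\F_q$-linearly independent, so the task reduces to making these leading coefficient vectors independent without enlarging any $|\ve b_i|$.

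The algorithm I would run repeats one step: while $\mathrm{lc}(\ve b_1),\dots,\mathrm{lc}(\ve b_m)$ are $\F_q$-dependent, find by Gaussian elimination coefficients $c_i\in\F_q$, not all zero, with $\sum_i c_i\,\mathrm{lc}(\ve b_i)=\ve{0}$; choose $k$ with $c_k\neq0$ and $d_k$ maximal over $\{i:c_i\neq0\}$; and replace $\ve b_k$ by $\ve b_k'=\sum_i c_i x^{\,d_k-d_i}\ve b_i$, keeping the other basis vectors. By the choice of $k$ each exponent $d_k-d_i$ is $\geq 0$, so $\ve b_k'\in\F_q[x]^m$, and since the update just scales $\ve b_k$ by the unit $c_k$ and adds $\F_q[x]$-multiples of the other basis vectors, the new tuple is again an $\F_q[x]$-basis of $L$. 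Its $x^{d_k}$-coefficient is $\sum_i c_i\,\mathrm{lc}(\ve b_i)=\ve{0}$, so $|\ve b_k'|<d_k$, while $\ve b_k'\neq\ve{0}$ because the $\ve b_i$ are $\F_q(x)$-independent and $\ve b_k$ appears with nonzero constant coefficient $c_k$.

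For termination and the iteration bound I would use the potential $\Phi=\sum_i|\ve b_i|-|\det B|$, which equals $OD(\ve b_1,\dots,\ve b_m)$ and is a nonnegative integer (here $\det B\in\F_q[x]\setminus\{0\}$ and every $\ve b_i\in\F_q[x]^m\setminus\{\ve{0}\}$). The update multiplies $\det B$ by the constant $c_k$, leaving $|\det B|$ fixed, and strictly decreases $\sum_i|\ve b_i|$, so $\Phi$ drops by at least $1$ per step; hence at most $OD(\ve b_1,\dots,\ve b_m)$ steps occur. When the loop halts the leading coefficient vectors are independent, i.e.\ the returned $\ve c_1,\dots,\ve c_m$ form a reduced basis of $L$, and since no $|\ve b_i|$ ever grows and each starts at most $M$, we have $|\ve c_i|\le M$. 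For the cost: each pass does one $O(m^3)$-operation elimination over $\F_q$ to test independence and, when a step is taken, $O(m^2M')$ operations to assemble $\ve b_k'$ (a sum of $m$ monomial multiples of $m$-dimensional vectors of degree $\le M$) and refresh the leading coefficient data; since $m^3+m^2M'=O(m^3M')$ and at most $OD(\ve b_1,\dots,\ve b_m)+1$ passes run (the final one merely confirming the stopping condition), the total is $O(m^3M'(OD(\ve b_1,\dots,\ve b_m)+1))$.

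I expect the one genuinely delicate point to be the structural identity in the first paragraph --- matching the top-degree coefficient of $\det B$ with the determinant of the matrix of leading coefficient vectors, and thereby characterizing $OD=0$ by $\F_q$-independence of the $\mathrm{lc}(\ve b_i)$. Once that is in place, the unimodularity of the update, the monotone potential, and the operation count are all routine.
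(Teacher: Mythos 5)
Your proposal is correct and is, in substance, exactly the argument behind the result the paper only cites: this proposition is quoted from Lenstra (Algorithm 1.7 and Proposition 1.14), and your reconstruction --- characterizing $OD=0$ by $\F_q$-independence of the leading-coefficient vectors via the top-degree coefficient of $\det B$, then repeatedly cancelling a dependent leading coefficient by a unimodular update that strictly decreases $\sum_i|\ve b_i|$ while fixing $|\det B|$ --- is Lenstra's reduction method together with its standard operation count. The degree bound $|\ve c_i|\le M$, the termination bound of at most $OD+1$ passes, and the $O(m^3M'(OD+1))$ cost all check out, so there is nothing to correct.
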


This result can
be extended
to find a reduced basis of a full lattice in $\mathbb{F}_q(x)^m$.
Let us assume that we have a basis $\ve b_1,\ve b_2,\dots,\ve b_m$
in $\mathbb{F}_q(x)^m$.
Let $L$ be the $\mathbb{F}_q[x]$-lattice generated by these vectors and
let $B$ be the matrix with columns $\ve b_1,\dots,\ve b_m$. Let $\gamma$ be
the least common multiple of all the denominators of the entries of $B$.
We consider the lattice $L'$ generated by
$\gamma\ve b_1,\dots, \gamma\ve b_m$.
Note that $L'\in \mathbb{F}_q[x]^m$.
So using Lenstra's algorithm one can find a
reduced basis $\ve c_1,\dots,\ve c_m$ in $L'$.
Note that $|\det L'|=|\det L|+m|\gamma|$. This implies that choosing
$\ve b_i'=\frac{1}{\gamma}\ve c_i$ we get a reduced basis of $L$.
Since the orthogonality defect of $\ve b_1,\dots ,\ve b_m$
is the same as the orthogonality defect of
$\gamma\ve b_1,\dots ,\gamma\ve b_m$, we obtain the following:

\begin{proposition} \label{Lattice reduction}
 Let $\ve b_1,\ve b_2,\dots, \ve b_m$ be a basis in
$\mathbb{F}_q(x)^m$ and let $L$ be the
$\mathbb{F}_q[x]$-lattice they generate. Let
$\gamma$ be the least common multiple of all the
denominators for the entries of
$\ve b_1,\ve b_2,\dots, \ve b_m$.
Let $M=|\gamma|+\max_{1\leq i\leq m}(|\ve b_i|)$
and let $M'=\max(M,1)$.
Then there exists an algorithm which takes
$O(m^3M'(OD(\ve b_1,\dots, \ve b_m)+1))$
arithmetic operations in $\mathbb{F}_q$
and returns a reduced basis of $L$.
\end{proposition}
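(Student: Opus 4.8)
The plan is to reduce the statement to the preceding proposition (Lenstra's reduction for sublattices of $\mathbb{F}_q[x]^m$) by clearing denominators and then rescaling. First I would replace the given basis by $\ve b_i^{(0)}:=\gamma\ve b_i$ for $i=1,\dots,m$. Since $\gamma$ is, by construction, a common denominator for every entry of $B$, each $\ve b_i^{(0)}$ lies in $\mathbb{F}_q[x]^m$; and since $\gamma\neq 0$, the $\ve b_i^{(0)}$ are still linearly independent over $\mathbb{F}_q(x)$. Writing $L'$ for the $\mathbb{F}_q[x]$-lattice they generate, we have $L'=\gamma L$. I would then run the algorithm of the preceding proposition on $\ve b_1^{(0)},\dots,\ve b_m^{(0)}$ to obtain a reduced basis $\ve c_1,\dots,\ve c_m$ of $L'$, and finally output $\ve b_i':=\gamma^{-1}\ve c_i$.

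Two bookkeeping facts make this work. Using that the valuation is additive, $|\gamma\ve v|=|\gamma|+|\ve v|$, the maximum degree of the vectors fed to the preceding proposition is $\max_i|\ve b_i^{(0)}|=|\gamma|+\max_i|\ve b_i|=M$, so the parameter called $M'$ there agrees with the one in our statement. Moreover the orthogonality defect is invariant under multiplying all basis vectors by a common nonzero scalar: since $|\det(\gamma B)|=m|\gamma|+|\det(B)|$ and $\sum_i|\gamma\ve b_i|=m|\gamma|+\sum_i|\ve b_i|$, the two defining differences coincide, so $OD(\ve b_1^{(0)},\dots,\ve b_m^{(0)})=OD(\ve b_1,\dots,\ve b_m)$. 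Hence the preceding proposition runs in $O(m^3M'(OD(\ve b_1,\dots,\ve b_m)+1))$ operations in $\mathbb{F}_q$. For the last step, $\ve c_1,\dots,\ve c_m$ is a basis of $L'=\gamma L$, so $\gamma^{-1}\ve c_1,\dots,\gamma^{-1}\ve c_m$ is a basis of $L$; and by the same scaling invariance its orthogonality defect equals that of $\ve c_1,\dots,\ve c_m$, namely $0$, so the returned basis is reduced.

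I do not expect a genuine obstacle here: it is a routine denominator-clearing reduction, and the conceptual crux is simply the scale-invariance of the orthogonality defect, which is exactly what lets both the running-time parameter $OD$ and the reduced-ness of the output pass between $L$ and $L'$. The one point that needs care is the complexity accounting — one should check that computing $\gamma$ as a least common multiple of the at most $m^2$ denominators, forming the products $\gamma\ve b_i$, and dividing the $\ve c_i$ by $\gamma$ together cost no more than the bound inherited from the preceding proposition. All the rational functions involved have numerators and denominators of degree $O(M)$, and there are only $O(m^2)$ such operations, so this preprocessing and postprocessing is subsumed into the stated bound; I would verify this explicitly to be safe, but it does not affect the asymptotics.
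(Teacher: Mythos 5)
Your proposal is correct and follows essentially the same route as the paper: the text preceding the proposition clears denominators by multiplying through by $\gamma$, runs Lenstra's algorithm on the resulting sublattice of $\F_q[x]^m$, rescales by $\gamma^{-1}$, and invokes exactly the scale-invariance of the orthogonality defect (via $|\det(\gamma B)| = m|\gamma| + |\det B|$) to conclude. Your extra remark on the cost of computing $\gamma$ and the scalings is a harmless refinement of the same argument.
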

\begin{proof}
Lenstra's method~\cite[Algorithm 1.7]{Lenstra} together with
its analysis~\cite[Proposition 1.14]{Lenstra} gives the result.
\end{proof}

Given an integer $k$, the set of elements of the lattice
whose valuation is smaller than $k$ is a finite
dimensional $\mathbb{F}_q$-vector space (this is a
consequence of Lemma \ref{orthogonality defect}), and a basis of
this vector space can also be computed efficiently.

The algorithm
of Proposition~\ref{Lattice reduction}
 finds a reduced basis of a lattice which is given by a basis.
However, one can ask the following question: what happens if the lattice is only given
by an $\F_q[x]$-module generating system?
In such situations an algorithm by Paulus~\cite[Algorithm 3.1.]{Paulus}
is applicable. It finds a reduced basis of a lattice
in $\F_q(x)^m$ given by a system of generators. We shall make use of
the fact that the valuations of the reduced basis obtained by Paulus'
algorithm will not be greater than those of the given generators.

\section{Maximal orders over $\mathbb{F}_q[x]$}
\label{sec:maxord}

\subsection{Preliminaries}

In this subsection we assume that $R$ is a principal ideal domain
with quotient field $\K$ and $\Aalg$ is a central simple algebra
isomorphic to $M_n(\K)$. Recall that an {\em $R$-order} in $\Aalg$
is a full $R$-lattice which is at the same time a subring
of $\Aalg$ containing the identity element.
Maximal orders are orders maximal with
respect to inclusion. We start with rephrasing
\cite[Theorem 21.6]{Reiner}, specialized to this setting.

\begin{proposition}
\label{prop:maxord_pid}
Let $\A=\mathrm{Hom}_\K(V,V)$ where $V$ is a vector space of
dimension $n$ over $\K$. Let $L$ be any full $R$-lattice in $V$.
Then $\mathrm{Hom}_{R}(L,L)$, identified with the subring
$$O(L)=\{a\in \A: aL\leq L\}$$
of $\A$, is a maximal $R$-order in $\A$, and all maximal
orders are of this form.
\end{proposition}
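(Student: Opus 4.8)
The statement is essentially the classical description of maximal orders in $\mathrm{End}_\K(V)$ over a PID, and the plan is to mimic the proof of \cite[Theorem 21.6]{Reiner}, but keeping only the ingredients that survive when $R$ is a PID (so that every finitely generated torsion-free $R$-module is free). First I would verify that $O(L)=\{a\in\A: aL\le L\}$ is indeed an $R$-order. It is clearly a subring containing $1$. To see that it is a full $R$-lattice, fix an $R$-basis $e_1,\dots,e_n$ of $L$ (this exists since $L$ is a free $R$-module of rank $n$, $R$ being a PID); then each $a\in\A$ is represented by a matrix over $\K$ in this basis, and $aL\le L$ holds precisely when that matrix has entries in $R$. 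Hence $O(L)\cong M_n(R)$ as an $R$-module, which is a free $R$-module of rank $n^2=\Dim_\K\A$, so $O(L)$ is a full $R$-lattice and therefore an $R$-order.

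Next I would prove maximality. Suppose $\Lambda\supseteq O(L)$ is an $R$-order in $\A$. The key point is that $L$ is then a $\Lambda$-module: since $\Lambda$ is an order it is an $R$-lattice, hence $R$-torsion-free and finitely generated, and acting on $V$ it must map $L$ into a finitely generated $R$-submodule of $V$; more directly, $\Lambda L$ is a finitely generated $R$-submodule of $V$ (generated by products of a generating set of $\Lambda$ with $e_1,\dots,e_n$), it contains $L=1\cdot L$, and it is torsion-free, hence a full $R$-lattice $L'$ in $V$ with $L\le L'$. Because $\Lambda$ is a ring, $\Lambda L'=\Lambda\Lambda L=\Lambda L=L'$, so $L'$ is a $\Lambda$-stable lattice, i.e. $\Lambda\subseteq O(L')$. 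Now I compare $O(L)$ and $O(L')$: choosing a common "denominator" $d\in R\setminus\{0\}$ with $L\le L'\le \frac1d L$ (possible since both are full lattices), one sees $O(L')\subseteq \mathrm{End}_K$ still sends $L$ into $\frac1d L$ and... — more cleanly, I would instead argue directly that $\Lambda=O(L)$ by a dimension/index count: $\Lambda/O(L)$ is an $R$-torsion module, and using that $\A$ is simple one shows any order strictly containing $O(L)$ would generate, together with the two-sided ideal it creates, all of $\A$, contradicting that $\Lambda\ne\A$. Concretely, for a maximal order it suffices to show $O(L)$ is not contained in any larger order; the standard trick is that $\Lambda O(L)\Lambda$-type arguments or, simplest, that $M_n(R)$ is a maximal order in $M_n(\K)$, which can be checked by hand: if a matrix $a\in M_n(\K)$ together with $M_n(R)$ generates an order $\Lambda$, then all matrix units $E_{ij}\in\Lambda$, so $E_{11}aE_{11}\in\Lambda\cap \K E_{11}$ must be integral over $R$ in a way forcing its entry into $R$ (since $R$ is integrally closed, being a PID hence a UFD), and similarly for every entry of $a$, whence $a\in M_n(R)$. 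Transporting through the $R$-module isomorphism $O(L)\cong M_n(R)$ gives maximality of $O(L)$.

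Finally I would establish that \emph{every} maximal order has this form. Let $\Gamma$ be any maximal $R$-order in $\A$. Pick any full $R$-lattice $L_0$ in $V$ (e.g. the $R$-span of a $\K$-basis) and set $L=\Gamma L_0$, a finitely generated torsion-free $R$-submodule of $V$ containing a $\K$-basis, hence a full lattice; it is $\Gamma$-stable, so $\Gamma\subseteq O(L)$. Since $O(L)$ is an order and $\Gamma$ is maximal, $\Gamma=O(L)$. This completes the proof.

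The main obstacle I anticipate is the maximality argument: proving that $M_n(R)$ (equivalently $O(L)$) admits no strictly larger order. The clean way is to exploit that $R$ is integrally closed in $\K$ — so any element of an order, being integral over $R$, that happens to lie in a copy of $\K$ inside $\A$ must already lie in $R$ — together with the presence of all matrix units in any order containing $M_n(R)$; conjugating a putative extra generator by these matrix units isolates its entries and forces them into $R$. Making this extraction precise (and checking the matrix units really do lie in the larger order, which uses that the larger order contains $M_n(R)\ni E_{ij}$ already) is the one place where a little care is needed; everything else is bookkeeping with free modules over a PID.
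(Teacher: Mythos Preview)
The paper does not actually prove this proposition; it simply quotes it as a specialization of \cite[Theorem~21.6]{Reiner} to the PID setting. So there is no ``paper's own proof'' to compare against, only Reiner's.

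Your plan is essentially the standard argument from Reiner and is correct: identify $O(L)$ with $M_n(R)$ via an $R$-basis of $L$; prove $M_n(R)$ is maximal using matrix units and integral closedness of $R$; and for the converse, given a maximal order $\Gamma$, manufacture a $\Gamma$-stable lattice as $L=\Gamma L_0$. Two small comments. First, your maximality paragraph wanders through a couple of abandoned approaches before landing on the right one; in a clean write-up go directly to the matrix-unit argument: for $a=(a_{ij})\in\Lambda\supseteq M_n(R)$ the element $E_{1i}\,a\,E_{j1}=a_{ij}E_{11}$ lies in $\Lambda$, hence is integral over $R$, and since $R$ (a PID) is integrally closed one gets $a_{ij}\in R$, so $a\in M_n(R)$. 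Second, when you ``transport through the $R$-module isomorphism $O(L)\cong M_n(R)$'', what you need is that this is a \emph{ring} isomorphism (induced by the choice of $R$-basis of $L$), not merely an $R$-module isomorphism; otherwise maximality as an order would not transfer.
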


In terms of matrices, the second statement of the
theorem gives the following.

\begin{corollary}
\label{cor:max_orders}
Assume that $\Lambda$ is a maximal $R$-order
in $M_n(\K)$.
Then there exists an invertible matrix
$P\in M_n(\K)$ such that
$\Lambda=PM_n(R)P^{-1}$.
\end{corollary}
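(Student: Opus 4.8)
The plan is to derive Corollary~\ref{cor:max_orders} directly from Proposition~\ref{prop:maxord_pid} by fixing an identification of $M_n(\K)$ with $\mathrm{Hom}_\K(V,V)$ and then translating the lattice-theoretic statement into matrix language. First I would set $V=\K^n$ (column vectors), so that $M_n(\K)$ acts on $V$ by left multiplication and is thereby identified with $\mathrm{Hom}_\K(V,V)$. Given a maximal $R$-order $\Lambda$, Proposition~\ref{prop:maxord_pid} tells us that $\Lambda=O(L)$ for some full $R$-lattice $L\leq V$; the goal is to read off $O(L)$ explicitly once we have a basis of $L$.

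The key step is to write $L$ in terms of a basis. Since $R$ is a principal ideal domain, the full $R$-lattice $L$ in $\K^n$ is free of rank $n$; let $p_1,\dots,p_n\in\K^n$ be an $R$-basis, and let $P\in M_n(\K)$ be the matrix with columns $p_1,\dots,p_n$. Then $P$ is invertible over $\K$ (its columns form a $\K$-basis of $V$) and $L=P\cdot R^n$, where $R^n$ denotes the standard free $R$-lattice of column vectors with entries in $R$. Now I would compute $O(L)=\{a\in M_n(\K):aL\subseteq L\}$: the condition $aL\subseteq L$ reads $aP R^n\subseteq PR^n$, equivalently $P^{-1}aP\cdot R^n\subseteq R^n$. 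A matrix $b\in M_n(\K)$ satisfies $bR^n\subseteq R^n$ if and only if every entry of $b$ lies in $R$ (apply $b$ to the standard basis vectors of $R^n$), i.e. $b\in M_n(R)$. Hence $a\in O(L)$ iff $P^{-1}aP\in M_n(R)$, that is, $O(L)=PM_n(R)P^{-1}$. Combining with $\Lambda=O(L)$ gives the claim.

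I do not expect a genuine obstacle here, since the corollary is essentially a notational unpacking of the proposition; the only point requiring a little care is the elementary fact that $bR^n\subseteq R^n$ forces $b\in M_n(R)$, which follows by evaluating $b$ on the standard basis of $R^n$, and the observation that the standard free lattice $R^n$ is itself a full $R$-lattice so that $PR^n$ genuinely ranges over all full $R$-lattices as $P$ ranges over $\mathrm{GL}_n(\K)$. One might also remark that $P$ is only determined up to right multiplication by an element of $\mathrm{GL}_n(R)$, corresponding to a change of $R$-basis of $L$, but this ambiguity is irrelevant for the statement.
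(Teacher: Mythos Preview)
Your proposal is correct and follows essentially the same approach as the paper: set $V=\K^n$, invoke Proposition~\ref{prop:maxord_pid} to write $\Lambda=O(L)$ for a full $R$-lattice $L$, and take $P$ to be the matrix whose columns form an $R$-basis of $L$. The paper's own proof is terser and leaves the verification $O(L)=PM_n(R)P^{-1}$ implicit, whereas you spell it out; your added details are all correct and make the argument self-contained.
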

\begin{proof}
The theorem with with $V=\K^n$ gives that
every maximal $R$-order in $M_n(\K)$
is ${\cal O}(L)$ for a full $R$-lattice
$L$ in $\K^n$. Let $P$ be a matrix
whose columns are an $R$-basis of $L$.
\end{proof}

We remark that this claim can be found for quaternion algebras in \cite[Exercise 4.2]{Vigneras}.

\smallskip

Our eventual aim is to construct a maximal $R$-order
in $M_n(\K)$. We will construct an initial order $\Lambda_0$
in a rather straightforward way and iteratively enlarge
it. Strictly speaking, our initial object $\Lambda_0$
will not be an order. We say that an $R$-subalgebra $\Lambda$
of $\A$ is an {\em almost $R$-order} in $\A$ if
it is a full $R$-lattice in $\A$. Thus orders
are almost orders containing the identity element of $\A$.
It turns out that if $\Lambda_0$ is an almost $R$-order,
then the $R$-lattice generated by $\Lambda_0$ and the identity
element of $\A$ is an $R$-order.

Discriminants enable us to control the depth of
chains of (almost) orders and will also be useful in representing
orders efficiently. The {\em reduced trace}, $\tr a$, of an
element $a$ of an $\A$ is simply the trace of $a$ as an
$n$ by $n$ matrix. (This is well defined by
the Noether-Skolem theorem.) To compute
reduced traces efficiently, it is not necessary to know
an isomorphism $\A\cong M_n(\K)$. If $n$ is not
divisible by the characteristic of $\K$, then $\tr a$ is
$\frac{1}{n}$ times the trace of the image of $a$ under
the regular representation of $a$. In general, the reduced
trace can be computed by taking an appropriate coefficient
of the $n$th root of the characteristic polynomial of
the regular representation. This is because the regular representation
of $\A$ decomposes as a direct sum of $n$ copies of the
standard $n$-dimensional (irreducible) representation.

The {\em bilinear trace
form} on $\Aalg$ is the symmetric bilinear function
$(a,b)\mapsto \tr ab$. As the matrix corresponding to an
element of an almost $R$-order $\Lambda$
is similar to a matrix with entries of $R$,
the reduced trace of any element of $\Lambda$ is
from $R$. The discriminant
$d(\Lambda)$ can be defined as the principal ideal of $R$ generated by
the determinant of the Gram matrix
$\left( \tr b_ib_j\right )_{i,j=1}^{n^2}$ where $b_1,\ldots,b_{n^2}$
are an $R$-basis for $\Lambda$. It is nonzero
and independent of
the choice of the basis. We can loosely think of $d(\Lambda)$
as an element of $R$, defined up to a unit of $R$. As the
bilinear trace form is non-degenerate, we have the following
(see \cite[Exercise 10.3]{Reiner}).

\begin{proposition}
\label{prop:disc-incr}
Let $\Lambda$ and $\Gamma$ be almost $R$-orders in $\A$ such that
$\Lambda\subseteq\Gamma$. Then $d(\Gamma) | d(\Lambda)$
and $\Lambda=\Gamma$ if and only if $d(\Gamma)=d(\Lambda)$.
\end{proposition}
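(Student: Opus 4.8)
The plan is to reduce the statement to elementary divisors over the principal ideal domain $R$. Since $\Lambda\subseteq\Gamma$ are both full $R$-lattices in the $n^2$-dimensional $\K$-vector space $\A$, they have the same rank $n^2$, so the structure theorem for finitely generated modules over a PID provides an $R$-basis $c_1,\dots,c_{n^2}$ of $\Gamma$ together with nonzero elements $d_1,\dots,d_{n^2}\in R$ such that $d_1c_1,\dots,d_{n^2}c_{n^2}$ is an $R$-basis of $\Lambda$.

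First I would compare the Gram matrices of the two bases with respect to the bilinear trace form. Setting $G_\Gamma=\left(\tr c_ic_j\right)_{i,j=1}^{n^2}$ and using the basis $b_i=d_ic_i$ of $\Lambda$, one has $\tr b_ib_j=d_id_j\tr c_ic_j$, so $G_\Lambda=DG_\Gamma D$ with $D=\mathrm{diag}(d_1,\dots,d_{n^2})$. Taking determinants gives $\det G_\Lambda=\delta^2\det G_\Gamma$ where $\delta=d_1\cdots d_{n^2}$. Passing to the principal ideals generated by these determinants — which, as recalled just before the statement, do not depend on the chosen bases — this reads $d(\Lambda)=(\delta)^2\,d(\Gamma)$ as ideals of $R$. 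In particular $d(\Gamma)\mid d(\Lambda)$, which is the first assertion.

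For the second assertion, recall that $d(\Gamma)\ne(0)$: this is precisely the non-degeneracy of the reduced trace form on $\A\cong M_n(\K)$ noted above. Hence in the domain $R$ we may cancel the nonzero ideal $d(\Gamma)$ from the identity $d(\Lambda)=(\delta)^2d(\Gamma)$, so $d(\Lambda)=d(\Gamma)$ holds if and only if $(\delta)^2=R$, i.e. if and only if $\delta$ is a unit of $R$, i.e. if and only if every $d_i$ is a unit. The last condition says exactly that $d_1c_1,\dots,d_{n^2}c_{n^2}$ is already an $R$-basis of $\Gamma$, that is, $\Lambda=\Gamma$.

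The whole argument is routine linear algebra over a PID; the only point that genuinely must be in place is that $d(\Gamma)$ is a nonzero ideal, so that the cancellation in the last step is legitimate — and this is guaranteed by the non-degeneracy of the trace form, which is why the hypothesis that $\A$ is a form of a full matrix algebra (equivalently, separable) is used. One could alternatively just cite \cite[Exercise 10.3]{Reiner}, but since all ingredients are already available I would spell out the short derivation above.
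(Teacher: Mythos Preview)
Your argument is correct and is the standard proof via elementary divisors over a PID; the key identity $d(\Lambda)=(\delta)^2 d(\Gamma)$ together with the non-degeneracy of the trace form yields both claims exactly as you say. The paper itself does not prove the proposition but only records it as a consequence of non-degeneracy and refers to \cite[Exercise~10.3]{Reiner}, so your write-up is essentially the intended solution to that exercise and matches the paper's (implicit) approach.
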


The following statement gives an $R$-lattice as an upper
bound for $R$-orders containing a given almost order.
An extension to more general rings $R$ is used in the proof of
\cite[Theorem~10.3]{Reiner}. For orders over principal ideal domains it is
stated explicitly in~\cite[Proposition~2.2]{RLIR}. As we need
a slight generalization to almost orders, we give a proof for completeness.

\begin{proposition}
\label{prop:discriminant}
Let $\Lambda$ and $\Gamma$ be almost $R$-orders in $\A$ such that
$\Lambda\subseteq\Gamma$.
Then $\Gamma \subseteq \frac{1}{d}\Lambda$ where $d=d(\Lambda)$.
\end{proposition}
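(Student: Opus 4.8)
The plan is to exploit the non‑degeneracy of the bilinear trace form to realize $\Gamma$ inside the $R$-dual lattice of $\Lambda$, and then to clear denominators using the discriminant. First I would fix an $R$-basis $b_1,\dots,b_{n^2}$ of $\Lambda$ and let $G = \left(\tr b_ib_j\right)_{i,j}$ be the associated Gram matrix, so that $\det G$ generates $d(\Lambda)$ as an ideal; since the form is non-degenerate, $\det G\neq 0$. Let $b_1^*,\dots,b_{n^2}^*$ be the dual basis of $\A$ with respect to the trace form, characterized by $\tr(b_i^* b_j)=\delta_{ij}$, and let $\Lambda^* = \sum_i R b_i^*$ be the $R$-span of this dual basis, the \emph{dual lattice} of $\Lambda$.

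The key step is to show $\Gamma\subseteq \Lambda^*$. Take any $g\in\Gamma$ and write $g=\sum_i c_i b_i^*$ with $c_i\in\K$; I must check $c_i\in R$. Pairing against $b_j$ gives $c_j=\tr(g\, b_j)$. Now $g\in\Gamma$ and $b_j\in\Lambda\subseteq\Gamma$, so $g\,b_j\in\Gamma$ because $\Gamma$ is a subring; and by the discussion preceding Proposition~\ref{prop:disc-incr} the reduced trace of any element of an almost $R$-order lies in $R$. Hence $c_j=\tr(g\, b_j)\in R$, so $g\in\Lambda^*$, and therefore $\Gamma\subseteq\Lambda^*$.

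It remains to see that $d\,\Lambda^*\subseteq \Lambda$ where $d=d(\Lambda)$. The change-of-basis matrix from $(b_i)$ to $(b_i^*)$ is exactly $G^{-1}$: writing $b_i^*=\sum_k (G^{-1})_{ki} b_k$ and pairing with $b_j$ recovers $(G^{-1})^T G = I$, which holds since $G$ is symmetric. Thus $\Lambda^* = \sum_i R\, b_i^*$ has, relative to the basis $(b_k)$, coordinate vectors given by the columns of $G^{-1}$, whose entries are elements of $R$ divided by $\det G$ (by the adjugate formula). Multiplying by $\det G$, a generator of $d$, clears these denominators, so $d\,\Lambda^*\subseteq\Lambda$, i.e. $\Lambda^*\subseteq \frac1d\Lambda$. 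Combining with the previous paragraph yields $\Gamma\subseteq\Lambda^*\subseteq\frac1d\Lambda$, as claimed.

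I do not anticipate a serious obstacle: the only points needing care are that $\Gamma$ genuinely is a subring containing $\Lambda$ (so that products $g\,b_j$ stay in $\Gamma$ and hence have traces in $R$) — this is exactly the hypothesis that both are almost $R$-orders with $\Lambda\subseteq\Gamma$ — and that $d(\Lambda)$ should be read as the ideal $(\det G)$, so that ``multiplying by $d$'' means multiplying by this generator. The argument uses nothing about $R$ beyond its being an integral domain with $\det G\neq 0$, and in particular never invokes maximality, which is why it applies verbatim to almost orders.
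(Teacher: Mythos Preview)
Your proof is correct and follows essentially the same approach as the paper: both arguments fix an $R$-basis of $\Lambda$, use that $\tr(g\,b_j)\in R$ for $g\in\Gamma$, and then invert the Gram matrix via Cramer's rule (equivalently, the adjugate formula) to show each coefficient of $g$ lies in $\frac{1}{d}R$. The only cosmetic difference is that you package the computation in the language of the dual lattice $\Lambda^*$, whereas the paper applies Cramer's rule directly without naming $\Lambda^*$.
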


\begin{proof}
Let $b_1,\ldots,b_{n^2}$ be an $R$-basis for $\Lambda$. Then
an element $a\in \Gamma$ can be written as $a=\sum_{i=1}^{n^2}
\alpha_ib_i$ with $\alpha_i\in \K$ ($i=1,\ldots,n^2$).
For $j=1,\ldots,{n^2}$ put $\beta_j=\tr ab_j$.
Then the elements $\beta_j$ are in $R$ because the elements $ab_j$ are in
the almost order $\Gamma$ which is contained in an $R$-order
and hence have reduced trace from $R$. By linearity, we
have $\sum_{i} \alpha_i\tr b_ib_j=\beta_j$. Cramer's rule gives
that each $\alpha_i$ is a quotient of an element of $R$ and
$d$, which means that $a\in \frac{1}{d}\Lambda$.
\end{proof}

An algorithmic consequence is that it is possible to represent $R$-orders
containing a given almost order $\Lambda$ as submodules of the factor
module $\frac{1}{d}\Lambda/\Lambda$. This will be particularly useful
when $R=\F_q[x]$, in which case this factor is an $n^2\deg d$-dimensional
vector space over $\F_q$.

Our algorithm for computing maximal orders
is an adaptation of the method proposed by
the first and third authors for the case $R=\Z$
in~\cite{RLIR}. The method is discussed in the context of
global fields in the Ph.~D.~thesis of the first
author~\cite{Ithesis}. The algorithm finds a maximal
order in a separable algebra over a global field. The algorithm
proposed in this paper works also for separable algebras in a similar
fashion. However, we only consider the case of full matrix algebras as 
some minor details (e.g. those regarding the trace form) are simpler in this case (and this is the only case we need later on). 

For completeness, we include proofs of
statements that are not rigorously proved for
general principal ideal rings in \cite{RLIR}.

Let $M$ be a full $R$-lattice in $\A$. Then the left order of $M$ is defined by
\begin{equation*}
O_l(M)=\{a\in \A|\; aM\subseteq M\}.
\end{equation*}
The set $O_l(M)$ is known to be an $R$-order of $\A$, see \cite[Chapter 8]{Reiner}.
It actually follows from the fact that $O_l(M)$ is isomorphic to
the intersection of two $R$-algebras: the
image of $\A$ under the left regular representation
and $\mathrm{Hom}_R(M,M)$ (embedded into $\mathrm{Hom}_\K(\A,\A)$).

The next two lemmas will be important tools for the algorithm
which finds maximal orders. The first one reduces the question
of enlarging an order over $R$ to a similar task for
$R_\pi$-orders where $\pi$ is a prime element of $R$.
Here $R_\pi\leq \K$ denotes the localization of $R$ at
the prime ideal $R\pi$,
that is, $R_\pi=\{\frac{\alpha}{\beta}:\alpha,\beta\in
R\mbox{~with~}\pi\not|\beta\}$.
If $\Gamma$ is an $R$-order in $\A$, then $\Gamma_\pi=R_\pi\Gamma$
is an $R_\pi$-order.

\begin{lemma}\label{lem:enlarging}
Let $\pi$ be a prime element of $R$ and $\Gamma$ be an
$R$-order in $\A$.
Suppose that $J$ is an ideal of
$\Gamma_\pi$ such that $J\geq \pi\Gamma_\pi$ and $O_l(J)>\Gamma_\pi$.
Put $I=\Gamma\cap J$. Then we have $I\geq \pi\Gamma$ and $O_l(I)>\Gamma$.
\end{lemma}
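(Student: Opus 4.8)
The plan is to separate the two assertions, $I \geq \pi\Gamma$ and $O_l(I) > \Gamma$, and to use the standard compatibility of localization with the lattice operations of intersection and quotient. First I would verify the easy containment: since $\pi\Gamma_\pi \leq J$ by hypothesis, intersecting with $\Gamma$ gives $\pi\Gamma = \Gamma \cap \pi\Gamma_\pi \leq \Gamma \cap J = I$; here one uses that $\pi\Gamma = \Gamma \cap \pi\Gamma_\pi$, which holds because an element $\gamma \in \Gamma$ lying in $\pi\Gamma_\pi$ can be written $\gamma = \pi \cdot \frac{\alpha}{\beta}\delta$ with $\delta \in \Gamma$, $\pi \nmid \beta$, and then $\beta\gamma \in \pi\Gamma$ forces $\gamma \in \pi\Gamma$ since $\beta$ is a unit modulo $\pi$. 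This also shows $I$ is a full $R$-lattice in $\A$, so $O_l(I)$ is an $R$-order by the remark preceding the lemma.

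The substantive point is that $O_l(I) > \Gamma$, and the key step is the identity $O_l(I)_\pi = O_l(J)$ together with the fact that localization is faithful in the sense that $\Gamma_\pi = O_l(I)_\pi$ would force $\Gamma = O_l(I)$ to fail only if the orders already differ at $\pi$. More precisely, I would argue as follows. Clearly $\Gamma \leq O_l(I)$ since $I$ is a $\Gamma$-bimodule (it is an intersection of the $\Gamma$-module $\Gamma$ with the $\Gamma_\pi$-ideal $J$, and $\Gamma \leq \Gamma_\pi$). To get strict inequality it suffices to show $O_l(I)_\pi \supseteq O_l(J)$, because by hypothesis $O_l(J) > \Gamma_\pi$, so $O_l(I)_\pi \neq \Gamma_\pi$, and an $R$-order equal to $\Gamma$ would localize to $\Gamma_\pi$; hence $O_l(I) \neq \Gamma$. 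So the heart of the matter is: if $a \in O_l(J)$, i.e. $aJ \subseteq J$, then some $R_\pi$-multiple, in fact $a$ itself, lies in $O_l(I)_\pi = R_\pi \cdot O_l(I)$. For this I would use that $I$ and $J$ agree after localizing at $\pi$: indeed $I_\pi = (\Gamma \cap J)_\pi = \Gamma_\pi \cap J_\pi = \Gamma_\pi \cap J = J$, the middle equality because localization at a prime commutes with finite intersections of submodules of a fixed module, and the last because $J \geq \pi\Gamma_\pi$ already contains $\Gamma_\pi$-torsion-free "denominators" away from $\pi$ — more simply, $J$ is already an $R_\pi$-module so $J_\pi = J$, and $J \subseteq \Gamma_\pi$ gives $\Gamma_\pi \cap J = J$. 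Thus $I_\pi = J$, and therefore $aJ \subseteq J$ reads $aI_\pi \subseteq I_\pi$, so $a \in O_l(I_\pi) = O_l(I)_\pi$, where the last equality again uses that forming the left order commutes with localization (this follows from the description of $O_l(\,\cdot\,)$ as the intersection of the regular representation with $\mathrm{Hom}_R(\,\cdot\,,\,\cdot\,)$, both of which localize correctly, or directly by clearing a single denominator prime to $\pi$).

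The main obstacle I anticipate is making the three localization-commutes-with claims — with $\cap$, with $O_l$, and the faithfulness statement "$\Gamma_\pi = O_l(I)_\pi \Rightarrow$ we cannot conclude $\Gamma = O_l(I)$, but $\Gamma_\pi \neq O_l(I)_\pi \Rightarrow \Gamma \neq O_l(I)$" — precise and in the right logical order, since it is tempting to conflate $O_l(I)_\pi$ with $O_l(I_\pi)$ without justification. Everything else is bookkeeping: the containments $\pi\Gamma \leq I$, $\Gamma \leq O_l(I)$, and $O_l(J) > \Gamma_\pi$ are either hypotheses or immediate. Once the identity $O_l(I)_\pi = O_l(I_\pi) = O_l(J)$ is in hand, strictness of $O_l(I) > \Gamma$ follows because $\Gamma \subsetneq O_l(I)$ is detected already at the single prime $\pi$.
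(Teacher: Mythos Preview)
Your argument is correct and rests on the same two facts the paper uses, namely $I_\pi = J$ (the paper writes this as $J = R_\pi I$) and a denominator-clearing step to pass from $O_l(J)$ to $O_l(I)$; only the packaging differs. The paper proceeds concretely: it fixes $a \in O_l(J)\setminus\Gamma_\pi$, writes each $aa_i$ as an $R_\pi$-combination of an $R$-generating set $a_1,\ldots,a_t$ of $I$, multiplies through by the product $\beta$ of all the denominators to get $\beta a \in O_l(I)$, and observes $\beta a \notin \Gamma$ since $\pi\nmid\beta$. You instead absorb this computation into the identity $O_l(I)_\pi = O_l(I_\pi) = O_l(J)$ and then use that $\Gamma_\pi \subsetneq O_l(I)_\pi$ forces $\Gamma \subsetneq O_l(I)$. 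Your route is a bit more conceptual and makes the role of localization explicit; the paper's route is shorter and produces an explicit witness in $O_l(I)\setminus\Gamma$. The ``clearing a single denominator prime to $\pi$'' justification you offer for $O_l(I_\pi)=O_l(I)_\pi$ is exactly the paper's calculation, so at the level of content the two proofs coincide.
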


This lemma is stated for $R=\Z$ in  \cite[Lemma~2.7 ]{RLIR}.
The proof goes through for any principal domain $R$. We include it
for completeness.

\begin{proof}
Clearly $I\geq \pi\Gamma$ and $I$ is an ideal of $\Gamma$.
We also have $J=R_\pi I$. Let $a\in O_l(J)\backslash\Gamma_\pi$.
 Let $a_1,a_2,\dots,a_t$ be a generating set of
$I$ as an $R$-module. Then these elements generate $J$ as an $R_\pi$-module whence
 for $i=1,\dots,t$ we have
\begin{equation}
aa_i=\frac{\alpha_{i1}}{\beta_{i1}}a_1+\ldots+\frac{\alpha_{it}}{\beta_{it}}a_t,
\end{equation}
where $\alpha_{ij},\beta_{ij}\in R$ and $\pi$
does not divide $\beta_{ij}$. Now put
$\beta=\prod_{i,j}\beta_{ij}$. Then $\beta aa_i$ is
in $I$ ($i=1,\ldots,t$), whence $\beta aI\leq I$ and consequently
$\beta a\in O_l(I)$.
Finally we observe that $\beta a$ is not in $\Gamma$ since
$\beta$ is not divisible by $\pi$ and therefore $\beta a\in \Gamma$
would imply $a\in \Gamma_\pi$. The proof is complete.
\end{proof}

The next simple statement is stated in \cite[Proposition~2.8]{RLIR}
for $R=\Z$. It enables us to use $\Lambda$ in place of $\Lambda_\pi$
in computations regarding sufficiently large one or
two-sided ideals of $\Lambda_\pi$.

\begin{proposition}\label{prop:local}

Let $\Lambda$ be an $R$-order in $\A$ and $\pi$ be a prime of $R$.
Then the map $\Phi: x\mapsto
x+\pi\Lambda_\pi (x\in\Lambda)$ induces an isomorphism of rings
$\Lambda/\pi\Lambda\cong \Lambda_\pi/\pi\Lambda_\pi$.
\end{proposition}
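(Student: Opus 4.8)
The plan is to exhibit an explicit ring homomorphism in each direction and show they are mutually inverse. The obvious candidate is exactly the map $\Phi$ in the statement: send $x \in \Lambda$ to its coset $x + \pi\Lambda_\pi$ in $\Lambda_\pi / \pi\Lambda_\pi$. First I would check that $\Phi$ is a well-defined ring homomorphism: this is immediate since it is the restriction to $\Lambda$ of the natural ring homomorphism $\Lambda_\pi \to \Lambda_\pi/\pi\Lambda_\pi$ composed with the inclusion $\Lambda \hookrightarrow \Lambda_\pi$ (note $\Lambda \subseteq \Lambda_\pi = R_\pi\Lambda$), and both of these are ring homomorphisms. The kernel of $\Phi$ is clearly $\Lambda \cap \pi\Lambda_\pi$, so the real content is to prove two things: (i) $\Phi$ is surjective, and (ii) $\Lambda \cap \pi\Lambda_\pi = \pi\Lambda$.

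For surjectivity, take an arbitrary element of $\Lambda_\pi$; since $\Lambda_\pi = R_\pi\Lambda$, it has the form $\frac{\alpha}{\beta}a$ with $a \in \Lambda$, $\alpha,\beta \in R$, and $\pi \nmid \beta$. Because $\pi \nmid \beta$ and $R$ is a principal ideal domain (so $R\pi$ is a maximal ideal and $\beta$ is a unit modulo $\pi$), there exists $\beta' \in R$ with $\beta\beta' \equiv 1 \pmod{\pi}$, i.e. $\beta\beta' = 1 + \pi\gamma$ for some $\gamma \in R$. Then $\frac{\alpha}{\beta}a - \alpha\beta' a = \alpha a(\frac{1}{\beta} - \beta') = \alpha a \cdot \frac{1 - \beta\beta'}{\beta} = -\frac{\alpha\gamma\pi}{\beta}a \in \pi\Lambda_\pi$, so $\frac{\alpha}{\beta}a \equiv \alpha\beta' a \pmod{\pi\Lambda_\pi}$ with $\alpha\beta' a \in \Lambda$. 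Hence every coset is hit, and $\Phi$ is surjective.

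For the kernel, one inclusion is trivial: $\pi\Lambda \subseteq \Lambda \cap \pi\Lambda_\pi$. For the reverse, suppose $y \in \Lambda$ and $y = \pi z$ with $z \in \Lambda_\pi$. Write $z = \frac{1}{\beta}w$ with $w \in \Lambda$ and $\pi \nmid \beta$ (clearing the common denominator of the $R_\pi$-coefficients of $z$ against a fixed $R$-basis of $\Lambda$). Then $\beta y = \pi w$ in $\Lambda$, so inside the free $R$-module $\Lambda$ we have $\pi \mid \beta y$ coordinatewise; since $R$ is a PID with $R\pi$ prime and $\pi \nmid \beta$, it follows that $\pi$ divides each coordinate of $y$, i.e. $y \in \pi\Lambda$. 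Therefore $\ker\Phi = \pi\Lambda$, and $\Phi$ induces the claimed isomorphism $\Lambda/\pi\Lambda \cong \Lambda_\pi/\pi\Lambda_\pi$.

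I do not expect any serious obstacle here; the only point requiring a little care is keeping the denominators and the PID/primality arguments straight — specifically using that a non-multiple of $\pi$ is invertible modulo $\pi$ (for surjectivity) and that $\pi$ is prime so it divides $\beta y$ coordinatewise only if it divides $y$ (for the kernel). Both rest on $R$ being a principal ideal domain, exactly as hypothesized.
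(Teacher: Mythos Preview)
Your proof is correct and follows exactly the same line as the paper's: show that $\Phi$ is a surjective ring homomorphism with kernel $\pi\Lambda$ and invoke the first isomorphism theorem. The paper merely asserts both surjectivity and the kernel computation as ``clear'' and ``straightforward,'' whereas you have spelled out the denominator-clearing and coordinatewise divisibility arguments; your opening sentence about exhibiting mutually inverse maps is a slight mismatch with what you actually do, but the argument itself is fine.
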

\begin{proof}

Clearly $\Phi: \Lambda\rightarrow \Lambda_\pi/\pi\Lambda_\pi$
is an epimorphism of rings. It is straightforward to check that
its kernel is $\pi\Lambda$.
\end{proof}

Now we quote some further theorems and definitions from \cite{RLIR}.
The next statement is \cite[Proposition 3.1]{RLIR}.

\begin{proposition}\label{prop:local2}
Let $\Lambda_\pi$ be an $R_\pi$-order in $\A$.
Then the residue class
ring $\overline{\Lambda}_\pi=\Lambda_\pi/\pi\Lambda_\pi$
is an algebra with identity element over the residue
class field ${\overline R}_\pi=R_\pi/\pi R_\pi\cong R/\pi R$
and $\mathrm{dim}_{\K}\A=\mathrm{dim}_{{\overline R}_\pi}
\overline{\Lambda}_\pi$. If $\Phi:\Lambda_\pi\rightarrow\overline{\Lambda}_\pi$
is the canonical epimorphism, then $\pi\Lambda_\pi\subseteq Rad(\Lambda_\pi)=
\Phi^{-1}Rad(\overline{\Lambda}_\pi)$ and $\Phi$ induces a
ring isomorphism $\Lambda_\pi/Rad(\Lambda_\pi)\cong
\overline{\Lambda}_\pi/Rad(\overline{\Lambda}_\pi)$.
\end{proposition}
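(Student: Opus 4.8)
The plan is to exploit that $R_\pi$ is a discrete valuation ring, i.e.\ a local principal ideal domain whose unique maximal ideal is $\pi R_\pi$, and that $\Lambda_\pi$ is a finitely generated torsion-free $R_\pi$-module which spans $\A$ over $\K$. Since $R_\pi$ is a PID, such a $\Lambda_\pi$ is free of some rank $r$, and fullness (it contains a $\K$-basis of $\A$) forces $r=\dim_\K\A$. Consequently $\overline{\Lambda}_\pi=\Lambda_\pi/\pi\Lambda_\pi\cong\Lambda_\pi\otimes_{R_\pi}\overline{R}_\pi$ is free of rank $r$ over the residue field $\overline{R}_\pi=R_\pi/\pi R_\pi$, so $\dim_{\overline{R}_\pi}\overline{\Lambda}_\pi=\dim_\K\A$; the natural ring map $R/\pi R\to\overline{R}_\pi$ is an isomorphism because $R/\pi R$ is already a field, and $\overline{\Lambda}_\pi$ is an $\overline{R}_\pi$-algebra with identity element $\Phi(1)$.

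Next I would prove $\pi\Lambda_\pi\subseteq\mathrm{Rad}(\Lambda_\pi)$ by the standard Nakayama argument. Let $\mathfrak{m}$ be any maximal left ideal of $\Lambda_\pi$ and put $S=\Lambda_\pi/\mathfrak{m}$, a simple left $\Lambda_\pi$-module. As a homomorphic image of $\Lambda_\pi$, $S$ is a finitely generated $R_\pi$-module, and since $\pi$ is central, $\pi S$ is a $\Lambda_\pi$-submodule of $S$; simplicity gives $\pi S=0$ or $\pi S=S$. The case $\pi S=S$ is excluded by Nakayama's lemma, applicable since $S$ is finitely generated over the local ring $R_\pi$ with maximal ideal $\pi R_\pi$, as it would force $S=0$. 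Hence $\pi\Lambda_\pi\subseteq\mathfrak{m}$ for every maximal left ideal $\mathfrak{m}$, so $\pi\Lambda_\pi\subseteq\mathrm{Rad}(\Lambda_\pi)$.

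Finally, since $\ker\Phi=\pi\Lambda_\pi$ is a two-sided ideal contained in $\mathrm{Rad}(\Lambda_\pi)=\bigcap_{\mathfrak m}\mathfrak m$, every maximal left ideal of $\Lambda_\pi$ contains $\ker\Phi$; hence $\mathfrak{m}\mapsto\Phi(\mathfrak{m})$ is a bijection between the maximal left ideals of $\Lambda_\pi$ and those of $\overline{\Lambda}_\pi$, and intersecting yields $\mathrm{Rad}(\Lambda_\pi)=\Phi^{-1}(\mathrm{Rad}(\overline{\Lambda}_\pi))$. The isomorphism theorems then give $\overline{\Lambda}_\pi/\mathrm{Rad}(\overline{\Lambda}_\pi)\cong(\Lambda_\pi/\pi\Lambda_\pi)/(\mathrm{Rad}(\Lambda_\pi)/\pi\Lambda_\pi)\cong\Lambda_\pi/\mathrm{Rad}(\Lambda_\pi)$, the isomorphism being the one induced by $\Phi$. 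I expect no genuine obstacle here: the only points needing care are checking that the simple module $S$ is module-finite over $R_\pi$ so that Nakayama applies, and noting that $\overline{\Lambda}_\pi$ is Artinian so that the radical-correspondence under $\Phi$ is clean; everything else is bookkeeping with the isomorphism theorems. This mirrors, specialized to a discrete valuation ring, standard reasoning in the theory of orders (cf.\ \cite{Reiner}).
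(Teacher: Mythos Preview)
Your argument is correct and follows the standard route for this kind of statement. Note, however, that the paper does not actually prove Proposition~\ref{prop:local2}: it merely quotes it as \cite[Proposition~3.1]{RLIR}. So there is no ``paper's own proof'' to compare against; what you have written is essentially the proof one would expect to find behind that citation. The only cosmetic point worth tightening is to state explicitly that $\Lambda_\pi$ is torsion-free over $R_\pi$ because it embeds in the $\K$-vector space $\A$, which justifies invoking the structure theorem to conclude freeness; you allude to this but it deserves one explicit sentence.
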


Now we will introduce the important concept of extremal orders:

\begin{definition}
Let $\Lambda_\pi$ and $\Gamma_\pi$ be $R_\pi$-orders in $\A$.
We say that $\Gamma_\pi$ {\em radically contains}
$\Lambda_\pi$ if and only if $\Gamma_\pi\supseteq \Lambda_\pi$ and
$Rad(\Gamma_\pi)\supseteq Rad(\Lambda_\pi)$.
This is a partial ordering on the set of $R_\pi$-orders.
Orders maximal with respect to this partial ordering are
called {\em extremal}.
\end{definition}

The next statement is \cite[Proposition 4.1]{RLIR}.

\begin{proposition}\label{prop:extremal1}
An $R_\pi$-order $\Lambda_\pi$ of $\A$ is extremal if and only if
$\Lambda_\pi=O_l(Rad(\Lambda_\pi))$.
\end{proposition}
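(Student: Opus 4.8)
\textbf{Proof proposal for Proposition \ref{prop:extremal1}.}

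The plan is to establish both implications directly from the definition of extremality, using Proposition \ref{prop:local2} and Lemma \ref{lem:enlarging} (in the local form, i.e. with $R$ replaced by $R_\pi$) as the principal tools. Write $J=Rad(\Lambda_\pi)$ throughout, and recall that $\pi\Lambda_\pi\subseteq J$ by Proposition \ref{prop:local2}, so $J$ is a two-sided ideal of $\Lambda_\pi$ containing $\pi\Lambda_\pi$, and that $O_l(J)$ is an $R_\pi$-order containing $\Lambda_\pi$ (using \cite[Chapter 8]{Reiner} for the latter, as quoted in the excerpt).

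For the direction ``$\Lambda_\pi=O_l(J)$ implies extremal'', suppose $\Gamma_\pi$ radically contains $\Lambda_\pi$, so $\Gamma_\pi\supseteq\Lambda_\pi$ and $Rad(\Gamma_\pi)\supseteq J$. Since $Rad(\Gamma_\pi)$ is an ideal of $\Gamma_\pi$, for any $a\in\Gamma_\pi$ we have $aJ\subseteq a\,Rad(\Gamma_\pi)\subseteq Rad(\Gamma_\pi)$; but we need $aJ\subseteq J$ to conclude $a\in O_l(J)$. To get this, observe that $J\subseteq Rad(\Gamma_\pi)\cap\Lambda_\pi$, and conversely $Rad(\Gamma_\pi)\cap\Lambda_\pi$ is an ideal of $\Lambda_\pi$ consisting of elements that are (locally) topologically nilpotent / lie in a nilpotent-modulo-$\pi$ ideal, hence is contained in $J=Rad(\Lambda_\pi)$; so in fact $J=Rad(\Gamma_\pi)\cap\Lambda_\pi$. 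Then for $a\in\Gamma_\pi$: $aJ\subseteq Rad(\Gamma_\pi)$ and $aJ\subseteq a\Lambda_\pi$. The second inclusion is not automatically inside $\Lambda_\pi$, so instead I argue as follows: $J\Gamma_\pi J\subseteq Rad(\Gamma_\pi)$ and one shows $J\Gamma_\pi J\subseteq\Lambda_\pi$ is too strong as well. The clean route is: $\Gamma_\pi J\Gamma_\pi$ is a two-sided ideal of $\Gamma_\pi$ inside $Rad(\Gamma_\pi)$, hence nilpotent modulo $\pi$; but actually the simplest correct statement is that $\Gamma_\pi$ normalizes $J$ in the sense that $\Gamma_\pi J\subseteq Rad(\Gamma_\pi)$ and $J\Gamma_\pi\subseteq Rad(\Gamma_\pi)$, and since $J$ contains $\pi\Lambda_\pi$ while $\Gamma_\pi\subseteq\frac{1}{d}\Lambda_\pi$ for $d=d(\Lambda_\pi)$ by Proposition \ref{prop:discriminant}, multiplying through by a high power of $\pi$ shows $J$ is an $O_l(J)$-module; one then checks $\Gamma_\pi\subseteq O_l(J)=\Lambda_\pi$, giving $\Gamma_\pi=\Lambda_\pi$. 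Thus $\Lambda_\pi$ is extremal.

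For the converse, ``extremal implies $\Lambda_\pi=O_l(J)$'', set $\Gamma_\pi=O_l(J)$. This is an $R_\pi$-order with $\Gamma_\pi\supseteq\Lambda_\pi$. I must show $Rad(\Gamma_\pi)\supseteq J$; then extremality forces $\Gamma_\pi=\Lambda_\pi$, and since $\Lambda_\pi\subseteq O_l(J)$ always holds (as $J$ is an ideal of $\Lambda_\pi$), we get $\Lambda_\pi=O_l(J)$. To see $J\subseteq Rad(\Gamma_\pi)$: the ideal $J$ of $\Lambda_\pi$ is also a left ideal of $\Gamma_\pi$ by definition of $O_l(J)$, and in fact a two-sided ideal of $\Gamma_\pi$ (right multiplication by $\Lambda_\pi$ preserves $J$, and $\Gamma_\pi J\subseteq J$, while $J\Gamma_\pi\subseteq J$ follows because $J\supseteq\pi\Lambda_\pi$ and $\Gamma_\pi J\subseteq J$ combined with $\Gamma_\pi\subseteq\frac1d\Lambda_\pi$). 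Passing to $\overline{\Gamma}_\pi=\Gamma_\pi/\pi\Gamma_\pi$ via Proposition \ref{prop:local2}, the image $\bar J$ of $J$ is a two-sided ideal; it is nilpotent in $\overline{\Gamma}_\pi$ because $J$ is nilpotent modulo $\pi\Lambda_\pi$ (as $J/\pi\Lambda_\pi=Rad(\overline{\Lambda}_\pi)$ is nilpotent by Proposition \ref{prop:local2}) and $\pi\Gamma_\pi\supseteq\pi\Lambda_\pi$ so $J^k\subseteq\pi\Lambda_\pi\subseteq\pi\Gamma_\pi$ for some $k$. A nilpotent two-sided ideal lies in the radical, so $\bar J\subseteq Rad(\overline{\Gamma}_\pi)$, and by Proposition \ref{prop:local2} again $J\subseteq\Phi^{-1}Rad(\overline{\Gamma}_\pi)=Rad(\Gamma_\pi)$, as required.

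The main obstacle is the first implication: one has to rule out the existence of an order $\Gamma_\pi\supsetneq\Lambda_\pi$ with $Rad(\Gamma_\pi)\supseteq Rad(\Lambda_\pi)$ purely from $\Lambda_\pi=O_l(Rad(\Lambda_\pi))$, and the subtlety is that $\Gamma_\pi$'s multiplication need not map $J=Rad(\Lambda_\pi)$ into $J$ a priori — only into $Rad(\Gamma_\pi)$. The resolution is to exploit that $J$ contains $\pi\Lambda_\pi$ together with the discriminant bound $\Gamma_\pi\subseteq\frac{1}{d}\Lambda_\pi$ from Proposition \ref{prop:discriminant}: this forces $\Gamma_\pi$ to be ``$\pi$-adically close'' to $\Lambda_\pi$, so that $J$ is stable under left multiplication by $\Gamma_\pi$ after all, putting $\Gamma_\pi\subseteq O_l(J)=\Lambda_\pi$. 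Once this stability is secured the rest is routine bookkeeping with radicals and the canonical map of Proposition \ref{prop:local2}.
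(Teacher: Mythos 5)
First, note that the paper does not prove this statement at all: it is quoted verbatim as \cite[Proposition 4.1]{RLIR}, and is a classical fact about extremal orders (going back to Benz--Zassenhaus; see the treatment of extremal and hereditary orders in \cite{Reiner}). So the comparison here is between your attempted proof and a known but genuinely nontrivial theorem, and your attempt has a real gap in the hard direction. Writing $J=Rad(\Lambda_\pi)$, your argument for ``$\Lambda_\pi=O_l(J)$ implies extremal'' must show that any order $\Gamma_\pi$ radically containing $\Lambda_\pi$ satisfies $\Gamma_\pi J\subseteq J$, i.e.\ $\Gamma_\pi\subseteq O_l(J)$. What you actually establish (and this part is fine, once the vague ``topologically nilpotent'' phrase is replaced by the honest argument: $(Rad(\Gamma_\pi)\cap\Lambda_\pi)^{k(m+1)}\subseteq Rad(\Gamma_\pi)^{k(m+1)}\subseteq\pi^{m+1}\Gamma_\pi\subseteq\pi\Lambda_\pi$ using $Rad(\Gamma_\pi)^k\subseteq\pi\Gamma_\pi$ and $\pi^m\Gamma_\pi\subseteq\Lambda_\pi$, then Proposition~\ref{prop:local2}) is $Rad(\Gamma_\pi)\cap\Lambda_\pi=J$ and $\Gamma_\pi J\subseteq Rad(\Gamma_\pi)$. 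But to conclude $\Gamma_\pi J\subseteq J$ from these you would additionally need $\Gamma_\pi J\subseteq\Lambda_\pi$, which is exactly as hard as the original claim. At the decisive moment your text says ``multiplying through by a high power of $\pi$ shows $J$ is an $O_l(J)$-module; one then checks $\Gamma_\pi\subseteq O_l(J)$'' --- the first clause is true by definition of $O_l(J)$ and carries no content, and the second clause is the statement to be proved, with no argument given. The ``$\pi$-adic closeness'' supplied by Proposition~\ref{prop:discriminant} does not force $J$ to be stable under left multiplication by $\Gamma_\pi$; closing this gap is the actual content of the theorem and requires a further idea (for instance the construction and analysis of the intermediate order $\Lambda_\pi+\Gamma_\pi J$, or the argument given in the sources cited above), not bookkeeping.

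Your other direction, ``extremal implies $\Lambda_\pi=O_l(J)$,'' is essentially sound but contains an unjustified (and unnecessary) claim: you assert $J\Gamma_\pi\subseteq J$, i.e.\ that $J$ is a two-sided ideal of $O_l(J)$, ``because $J\supseteq\pi\Lambda_\pi$ and $\Gamma_\pi\subseteq\frac1d\Lambda_\pi$,'' which is not an argument. Fortunately you do not need two-sidedness: since $O_l(J)\,J\subseteq J$ and $J^k\subseteq\pi\Lambda_\pi\subseteq\pi O_l(J)$, the image of $J$ in $O_l(J)/\pi O_l(J)$ is a nilpotent \emph{left} ideal of an Artinian ring, hence lies in its radical, and Proposition~\ref{prop:local2} then gives $J\subseteq Rad(O_l(J))$, so $O_l(J)$ radically contains $\Lambda_\pi$ and extremality forces equality. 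With that patch this direction stands; the first direction, however, is not proved.
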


Finally, we quote \cite[Proposition 4.5]{RLIR}.

\begin{proposition}\label{prop:extremal2}

Let $\Lambda_\pi\subset \Gamma_\pi$ be $R_\pi$-orders in $\A$.
Suppose that $\Lambda_\pi$ is extremal and $\Gamma_\pi$ is
minimal among the $R_\pi$-orders properly containing
$\Lambda_\pi$. Then there exists a two-sided ideal $I$ of $\Lambda_\pi$
minimal among those containing $Rad(\Lambda_\pi)$ such that
$O_l(I)\supseteq \Gamma_\pi$
\end{proposition}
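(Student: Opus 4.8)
The plan is to produce the required ideal in the form $\Phi^{-1}(S)$ for a suitable simple component $S$ of $\Lambda_\pi/Rad(\Lambda_\pi)$ --- the component that is ``seen'' by $\Gamma_\pi\cdot Rad(\Lambda_\pi)$. Write $\Lambda=\Lambda_\pi$, $\Gamma=\Gamma_\pi$ and $J=Rad(\Lambda_\pi)$. By Proposition~\ref{prop:local2}, $\bar\Lambda:=\Lambda/J$ is a finite dimensional semisimple algebra over the residue field of $R_\pi$; I would fix its Wedderburn decomposition $\bar\Lambda=S_1\oplus\cdots\oplus S_r$ into simple two-sided ideals, with central idempotents $e_1,\dots,e_r$, and let $\Phi\colon\Lambda\to\bar\Lambda$ be the projection. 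Then the two-sided ideals of $\Lambda$ that are minimal among those strictly containing $J$ are exactly the ideals $I_i:=\Phi^{-1}(S_i)$, and the condition $O_l(I)\supseteq\Gamma$ means precisely $\Gamma I\subseteq I$. So the goal reduces to finding an index $i$ with $\Gamma I_i\subseteq I_i$.

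The first step is to show $\Gamma J\subseteq\Lambda$ (and, by the symmetric argument, $J\Gamma\subseteq\Lambda$). Since $\Gamma/\Lambda$ is a finitely generated torsion $R_\pi$-module it is annihilated by some $\pi^m$, so $\pi^m\Gamma\subseteq\Lambda$. A direct computation (using $J\subseteq\Lambda\subseteq\Gamma$ and that $\Gamma$ is a ring) shows that $N:=\Gamma J+\Lambda$ is a subring of $\A$ containing $1$ with $\Lambda\subseteq N\subseteq\Gamma$, hence an $R_\pi$-order; minimality of $\Gamma$ over $\Lambda$ then forces $N=\Lambda$ or $N=\Gamma$. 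If $N=\Gamma$, iterating the identity $\Gamma=\Gamma J+\Lambda$ gives $\Gamma=\Gamma J^k+\Lambda$ for every $k\geq1$; choosing $k$ with $J^k\subseteq\pi^m\Lambda$ (possible because $J/\pi\Lambda=Rad(\Lambda/\pi\Lambda)$ is nilpotent) yields $\Gamma J^k\subseteq\pi^m\Gamma\subseteq\Lambda$, so $\Gamma=\Lambda$, contradicting $\Lambda\subsetneq\Gamma$. Hence $N=\Lambda$, i.e.\ $\Gamma J\subseteq\Lambda$. Thus $\Gamma J$ is a two-sided ideal of $\Lambda$ which is also a left $\Gamma$-ideal ($\Gamma\cdot\Gamma J=\Gamma J$), and $\Gamma/\Lambda$ becomes a $\bar\Lambda$-bimodule.

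Extremality of $\Lambda$ enters next: if $\Gamma J\subseteq J$ then $\Gamma\subseteq O_l(J)=\Lambda$ by Proposition~\ref{prop:extremal1}, contradicting $\Lambda\subsetneq\Gamma$; hence $\overline{\Gamma J}:=\Gamma J/J$ is a \emph{nonzero} two-sided ideal of $\bar\Lambda$. The crux is then to show that $\overline{\Gamma J}$ is a single simple component, and here I would invoke minimality of $\Gamma$ a second time. Decompose $\Gamma/\Lambda=\bigoplus_a e_a(\Gamma/\Lambda)$. If two distinct indices $a_1,a_2$ both had $e_{a_i}(\Gamma/\Lambda)\neq0$, consider $M:=\{g\in\Gamma:\bar g\in e_{a_1}(\Gamma/\Lambda)\}$, a $\Lambda$-sub-bimodule of $\Gamma$ with $\Lambda\subsetneq M\subsetneq\Gamma$; for $g_1,g_2\in M$, a lift $\epsilon_{a_1}\in\Lambda$ of $e_{a_1}$ satisfies $\epsilon_{a_1}g_1-g_1\in\Lambda$ (using $J\Gamma\subseteq\Lambda$), and feeding this into $g_1g_2=\epsilon_{a_1}g_1g_2-(\epsilon_{a_1}g_1-g_1)g_2$ together with the centrality of $e_{a_1}$ gives $\overline{g_1g_2}\in e_{a_1}(\Gamma/\Lambda)$, i.e.\ $g_1g_2\in M$; so $M$ would be an $R_\pi$-order strictly between $\Lambda$ and $\Gamma$ --- impossible. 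Hence $\Gamma/\Lambda=e_{a_0}(\Gamma/\Lambda)$ for a single index $a_0$, and the same idempotent computation gives $e_{a_0}\overline{\Gamma J}=\overline{\Gamma J}$, so $\overline{\Gamma J}\subseteq e_{a_0}\bar\Lambda=S_{a_0}$; being nonzero, $\overline{\Gamma J}=S_{a_0}$. Since $\Gamma J$ and $I_{a_0}$ then have the same image in $\bar\Lambda$ and both contain $J$, we get $\Gamma J=I_{a_0}$, whence $\Gamma I_{a_0}=\Gamma\cdot\Gamma J=\Gamma J=I_{a_0}$. So $I:=I_{a_0}$ is a two-sided ideal of $\Lambda$, minimal among those containing $Rad(\Lambda_\pi)$, with $O_l(I)\supseteq\Gamma$.

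I expect the last step --- isolating the single simple component of $\Lambda_\pi/Rad(\Lambda_\pi)$ that carries $\Gamma_\pi\cdot Rad(\Lambda_\pi)$ --- to be the main obstacle, as this is the place where minimality of $\Gamma_\pi$ is used in an essential way (extremality of $\Lambda_\pi$ is used earlier, but only cheaply, to rule out $\Gamma J=J$). If one is content with the weaker statement that the family of two-sided ideals $I\supseteq Rad(\Lambda_\pi)$ satisfying $O_l(I)\supseteq\Gamma_\pi$ is nonempty --- so that a minimal element of that family exists --- then the first two steps already suffice, with $\Gamma_\pi\cdot Rad(\Lambda_\pi)$ itself serving as a witness.
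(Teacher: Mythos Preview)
The paper does not give its own proof of this proposition; it is merely quoted from \cite[Proposition~4.5]{RLIR}. So there is no proof in the paper to compare against.

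That said, your argument is correct and self-contained. The three steps --- (1) showing $\Gamma J\subseteq\Lambda$ and $J\Gamma\subseteq\Lambda$ by checking that $\Gamma J+\Lambda$ (resp.\ $J\Gamma+\Lambda$) is an order and invoking minimality of $\Gamma$ together with the nilpotence of $J$ modulo $\pi$; (2) using extremality via Proposition~\ref{prop:extremal1} to ensure $\Gamma J\supsetneq J$; (3) exploiting the $\bar\Lambda$-bimodule structure on $\Gamma/\Lambda$ and minimality of $\Gamma$ once more to force $\Gamma/\Lambda=e_{a_0}(\Gamma/\Lambda)$ for a single central idempotent, hence $\overline{\Gamma J}=S_{a_0}$ and $\Gamma I_{a_0}=I_{a_0}$ --- are all valid. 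The verification that $M=\{g\in\Gamma:\bar g\in e_{a_1}(\Gamma/\Lambda)\}$ is closed under multiplication is the only delicate point, and your use of $J\Gamma\subseteq\Lambda$ (to make the left $\bar\Lambda$-action on $\Gamma/\Lambda$ well-defined) together with centrality of $e_{a_1}$ handles it correctly: for $b\neq a_1$ one has $\epsilon_b g_1\in\Lambda$, whence $\overline{\epsilon_b g_1 g_2}=\overline{\epsilon_b g_1}\cdot\bar g_2\in\bar\Lambda\cdot e_{a_1}(\Gamma/\Lambda)=e_{a_1}(\Gamma/\Lambda)$, so $e_b\overline{g_1g_2}=e_be_{a_1}(\cdots)=0$.

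Your closing remark is also accurate: the weaker conclusion ``some two-sided ideal $I\supsetneq J$ with $\Gamma\subseteq O_l(I)$ exists'' already follows from steps (1)--(2) with $I=\Gamma J$, and step (3) is precisely what upgrades this to a \emph{minimal} such ideal.
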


\subsection{The algorithm}

We start 
with a high-level description of the algorithm over
a general principal ideal domain $R$.
Let $R$ be a principal ideal domain, $\K$ its field of fractions.
Suppose that an algebra $\Aalg$, isomorphic to $M_n(\K)$ is given by
structure constants $\gamma_{ij}^k$ ($i,j,k=1,\ldots, n^2$) from $\K$
with respect to a basis $a_1,\ldots,a_{n^2}$.
We assume that these structure constants
are represented as fractions of pairs of elements from $R$.
Let $\delta$ be a common multiple (e.g., the product or the l.~c.~m.)
of the denominators. Then $a_i'=\delta a_i$ ($i=1,\ldots,n^2$)
will be a basis
with structure constants $\delta \gamma_{ij}^k\in R$. Therefore the
$R$-submodule $\Lambda_0$ of $\A$ with basis $a_1',\ldots,a_{n^2}'$
is an almost $R$-order.

We shall compute the discriminant $d=d(\Lambda_0)$. Let
$S=\{\pi_1,\ldots,\pi_r\}$ be the set of the prime
factors of $d$. Observe, that the discriminant of any $R$-order
conjugate to $M_n(R)$ is $1$. This also holds for $R_\pi$-orders
for any prime element $\pi$.
Therefore, by Corollary~\ref{cor:max_orders}
and by Proposition~\ref{prop:disc-incr}, ${\Lambda_0}_\pi$
is a maximal $R_\pi$-order for any prime $\pi$ not in $S$.

Starting with the order $\Lambda$ obtained by taking the
$R$-module generated by $\Lambda_0$ and the identity element,
for each prime in $S$ we test
constructively whether $\Lambda_\pi$ is a maximal $R_\pi$ order
using the two tests described below. By constructiveness we mean
that in the "no" case we construct an $R$-order
$\Gamma\supsetneq \Lambda$. If any of the tests finds such a $\Gamma$,
then we proceed with $\Gamma$ in place of $\Lambda$. Otherwise, if
$\Lambda_\pi$ passes the tests for every $\pi\in S$ then
we conclude that $\Lambda$ is already maximal. By
Proposition~\ref{prop:disc-incr} the number of such rounds is at most
the number of the prime divisors of $d$, counted with multiplicities.

The first test is used to constructively decide whether $\Lambda_\pi$ is
an extremal $R_\pi$-order by checking if $O_l(Rad(\Lambda_\pi))=\Lambda_\pi$
(Proposition \ref{prop:extremal1}).
To this end, we compute the ideal $I=Rad(\Lambda_\pi)\cap \Lambda$.
By Lemma \ref{lem:enlarging}, $\Lambda$ passes the test if and only if
$O_l(I)=\Lambda$. Otherwise $\Gamma=O_l(I)$ is an order strictly
containing $\Lambda$. To compute $I$, we work with the
$n^2$-dimensional $R/\pi R$-algebra $\B=\Lambda/\pi \Lambda$.
From Propositions \ref{prop:local} and \ref{prop:local2} we infer that
$I$ is the inverse image of $Rad(\B)$ with respect to the canonical map
$\Lambda \rightarrow \B$.

If $\Lambda_\pi$ passes the first test, then we proceed with
the test of Proposition \ref{prop:extremal2}: if there exists an ideal $J$
of $\Lambda_\pi$
minimal among the two-sided ideals properly containing $Rad(\Lambda_\pi)$
such that $O_l(J)>\Lambda_\pi$, then we construct an
$R$-order $\Gamma$ that properly contains $\Lambda$.
Like for the first test, we can work in the $R/\pi R$-algebra
$\B=\Lambda/\pi\Lambda$. Let
$J_1,\dots, J_m$ denote the minimal two-sided ideals of
$\B$ which contain $Rad(\B)$. We have $m\leq n^2$. Let $I_i$ denote
the inverse image of $J_i$ with  respect to the
map $\Lambda\rightarrow \B$.
As in the first case we obtain, that we have to compute
the rings $O_l(I_i)$ for $i=1,\dots, m$. We can stop when
$\Lambda<O_l(I_i)$ is detected, because then we have an order
properly containing $\Lambda$.

\subsection{The case $R=\F_q[x]$}

We continue with details of the key ingredients of
an efficient algorithm
for $R=\F_q[x]$
following the lines above.
These will give an $f$-algorithm
whose running time is polynomial in the size of
the input. The input is an array of $n^6$
structure constants represented as fractions
of polynomials.
We assume that the numerators
are of degree at most $d_N$ and the denominators are of
degree at most $d_D$. Thus the size of the input
is around $n^6(d_D+d_N)\log q$.

The l.~c.~m.~of the denominators and hence a basis for
the initial order $\Lambda_0$ can be computed in polynomial time.
The degree of this common denominator is at most $n^6d_D$,
whence $\Lambda_0$ will have a basis $a_1',\ldots,a_{n^2}'$,
where each $a_j'$ is $a_j$, multiplied by a polynomial of degree
at most $n^6d_D$. The structure constants for the basis
$a_1',\ldots,a_{n^2}'$
are polynomials of degree at most $n^6d_D+d_N$.
The discriminant $d=d(\Lambda_0)$ can be
efficiently computed in a direct way following the definition.
The entries of the matrices for the images of $a_j'$ at the regular
representation, written in terms of the basis $a_1',\ldots,a_{n^2}'$
are just structure constants for the basis $a_1',\ldots,a_{n^2}'$.
Therefore these entries are polynomials of degree bounded by $n^6d_D+d_N$
and hence the entries of the Gram matrix of the bilinear trace form are
polynomials of degree
$2n^6d_D+2d_N$. To compute $d(\Lambda_0)$, let $n=p^rk$ where $p$ is the characteristic
of $\F_q$ and $k$ is relatively prime to $p$. Then the characteristic polynomial
of $a_i'a_j'$, is the $n$th power of the characteristic polynomial of
$a_i'a_j'$ as an
$n$ by $n$ matrix. Therefore it is of the form
\begin{eqnarray*}
(X^n-(\tr a_i'a_j')X^{n-1}+\ldots)^n&=&
(X^{nk}-k(\tr a_i'a_j')X^{nk-1}+\ldots)^{p^r}\\
&=& X^{n^2}-
(k\tr a_i'a_j')^{p^r}X^{n^2-p^r}+\ldots.
\end{eqnarray*}
It follows that $d(\Lambda_0)$ is a polynomial
$D_0$ of degree at most $2n^8d_D+2n^2d_N$.

By Proposition~\ref{prop:discriminant}, we have
$\Lambda\leq \frac{1}{D_0}\Lambda_0$
for any $\F_q[x]$-order
$\Lambda\geq \Lambda_0$. Therefore we can represent
$\Lambda$ as the $\F_q[x]$-submodule $\Lambda/\Lambda_0$
of the factor module $\frac{1}{D_0}\Lambda_0/\Lambda_0$.
This factor module is an $n^2\deg D_0$-dimensional
vector space over the field $\F_q$. In fact,
the elements $\frac{x^k}{D_0}a_i'+\Lambda_0$ ($i=1,\ldots,n^2$,
$k=0,\ldots,\deg D_0-1$ form an $\F_q$-basis) and we
represent $\Lambda/\Lambda_0$
by an $\F_q$-basis written in terms of this basis.
Notice that the ideals $I$ whose left order $O_l(I)$
we compute throughout the algorithm are all (left)
$\Lambda_0$-submodules of $\frac{1}{D_0}\Lambda_0$
containing $D_0\Lambda_0$. Observe next that the multiplication
of $\A$ induces an $\F_q$-bilinear map $\mu$ from
$\frac{1}{D_0}\Lambda_0/\Lambda_0\times I/D_0I$
to $\frac{1}{D_0}I/I$. For $a\in \frac{1}{D_0}\Lambda$ and $b\in I$,
one can set
$$\mu(a+\Lambda_0,b+D_0I)=ab+I.$$
This is well defined as
$(\frac{1}{D_0}\Lambda_0)(D_0I)=\Lambda_0I\subseteq I$.
Taking an $\F_q$-basis $b_1,\ldots,b_t$
of $I/D_0I$, the factor $O_l(I)/\Lambda_0$ can be computed
as the intersection of the kernels of the linear maps
$\mu(\cdot,b_i)$ ($i=1,\ldots,t$). As the dimensions
are bounded by polynomials in $n$ and in the degree of $D_0$,
for every $I$ possibly
occurring in
the algorithm, $O_l(I)$ is computable in polynomial time.
Given an intermediate order $\Lambda$, we can compute
the candidate ideals $I$ by computing the radical of $\B=\Lambda/g\Lambda$
for the irreducible factors $g$ of $D_0$ and the minimal
two-sided ideals of $\B$ containing
the radical and finally by taking inverse images of these
at the map $\Lambda\rightarrow \B$.
As $\B$ is an $n^2\deg g$-dimensional
vector space over $\F_q$, its radical and the minimal
two-sided ideals containing it can be
computed in time polynomial in the input size
using for example the deterministic method of the third author
\cite{Ro1}. The minimal two-sided ideals containing the radical,
that is, the simple components of $\B/Rad(\B)$
can be found by the deterministic $f$-algorithm of Friedl and the
third author \cite{FR}.

For $\alpha_{ik}\in\F_q$ ($i=1,\ldots,n^2$, $k=0,\ldots\deg D_0-1$),
the combination
$\sum_{i=1}^{n^2}\sum_{k=0}^{\deg D_0-1}\alpha_{ik}
\frac{x^k}{D_0}a_i'$ of $a_1',\ldots,a_{n^2}'$
has coefficients whose numerators and denominators are polynomials
of degree at most
$\deg D_0\leq 2n^8d_D+2n^2d_N$. Together
with $a_1',\ldots,a_{n^2}'$, such representatives for an $\F_q$-basis
of $\Lambda/\Lambda_0$ give a system of generators over $\F_q[x]$
for $\Lambda$. When $\Lambda$ turns out to be maximal, then
we can use the lattice reduction algorithm by
Paulus~\cite{Paulus}
to obtain a basis for $\Lambda$ consisting of combinations
of $a_1',\ldots,a_{n^2}'$ with coefficients having numerators
and denominators also of degree at most $\deg D_0\leq 2n^8d_D+2n^2d_N$.
(Here we make use of the nature of the reduction algorithm:
it never increases the maximum degree of the coordinates
of the intermediate generators.)
This gives us the following theorem:

\begin{theorem}
\label{thm:maxord-construct}
Let $\A$ be isomorphic to $M_n(\mathbb{F}_q(x))$ given by structure constants
having numerators and denominators of degree at most $d_C\geq 1$.
A  maximal $\F_q[x]$-order $\Lambda $ can be
constructed by an f-algorithm running in time $(n+d_C+\log q)^{O(1)}$.
The output of the algorithm is an $\F_q[x]$-basis for $\Lambda$
whose elements are linear combinations in the original basis of $\A$
with coefficients which are ratios of polynomials of degree at most
$(2n^8+n^6+2n^2)d_C$.
\end{theorem}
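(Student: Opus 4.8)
The plan is to prove the theorem by instantiating, for $R=\F_q[x]$, the order-enlargement scheme described above, and by keeping track of the degrees of all rational functions that occur. First I would record the set-up. Clearing the denominators of the $n^6$ structure constants by a common multiple $\gamma$ of degree at most $n^6d_C$ turns the basis $a_1,\dots,a_{n^2}$ into $a_j'=\gamma a_j$ with structure constants in $\F_q[x]$ of degree at most $n^6d_C+d_C$, so the $\F_q[x]$-span $\Lambda_0$ of $a_1',\dots,a_{n^2}'$ is an almost order; adjoining the identity yields an $\F_q[x]$-order $\Lambda$. Computing the discriminant $D_0=d(\Lambda_0)$ from the reduced-trace Gram matrix — the reduced trace being extracted, as explained just before the theorem, from the appropriate coefficient of the regular-representation characteristic polynomial — the degree estimate carried out there gives $\deg D_0\le 2n^8d_C+2n^2d_C=(2n^8+2n^2)d_C$. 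One call to the univariate factorization oracle over $\F_q$ produces the irreducible factors $g$ of $D_0$; by Corollary~\ref{cor:max_orders} and Proposition~\ref{prop:disc-incr}, $\Lambda_\pi$ is already maximal at every prime $\pi$ not dividing $D_0$, so only the finitely many primes among the factors of $D_0$ need to be treated.

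Next I would run the enlargement loop, exactly as in the high-level description. For each irreducible factor $g$, Propositions~\ref{prop:local} and~\ref{prop:local2} allow one to work inside the $n^2\deg g$-dimensional $\F_q$-algebra $\B=\Lambda/g\Lambda$: compute $\Rad(\B)$ by the deterministic method of~\cite{Ro1} and the minimal two-sided ideals of $\B$ containing it (the simple components of $\B/\Rad(\B)$) by the $f$-algorithm of~\cite{FR}, pull these back to ideals $I$ of $\Lambda$ along $\Lambda\to\B$, and — invoking Lemma~\ref{lem:enlarging} together with Propositions~\ref{prop:extremal1} and~\ref{prop:extremal2} — check whether some $O_l(I)$ strictly contains $\Lambda$. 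If so, replace $\Lambda$ by the larger order $\Gamma=O_l(I)$ and repeat. By Proposition~\ref{prop:disc-incr} each replacement strictly lowers $\deg d(\Lambda)$, so the loop halts after at most $\deg D_0$ rounds and returns a maximal $\F_q[x]$-order. Correctness of the test and of the stopping criterion is thus inherited entirely from the cited propositions, so there is nothing further to prove on that front.

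For the complexity and the size of the intermediate data I would argue as follows. By Proposition~\ref{prop:discriminant} every intermediate order $\Lambda$ lies in $\frac{1}{D_0}\Lambda_0$, hence can be represented as an $\F_q$-subspace of the $n^2\deg D_0$-dimensional space $\frac{1}{D_0}\Lambda_0/\Lambda_0$; each computation of a left order $O_l(I)$ then reduces to intersecting kernels of $\F_q$-bilinear multiplication maps on spaces of dimension polynomial in $n$ and $\deg D_0$, and the radical and simple-component computations over $\F_q$ are likewise polynomial in those parameters. Since $\deg D_0\le(2n^8+2n^2)d_C$ and the only steps needing the oracle are the factorization of $D_0$ and the calls to~\cite{FR}, the whole procedure is a deterministic polynomial-time $f$-algorithm, of running time $(n+d_C+\log q)^{O(1)}$. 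Finally, the maximal order $\Lambda$ is available as an $\F_q[x]$-generating system consisting of $a_1',\dots,a_{n^2}'$ together with representatives $\frac{x^k}{D_0}a_i'$ ($0\le k<\deg D_0$) of an $\F_q$-basis of $\Lambda/\Lambda_0$; feeding this to Paulus's reduction algorithm~\cite{Paulus}, which never increases the maximum degree of the coordinates while (by Proposition~\ref{prop:discriminant}) keeping all denominators dividing $D_0$, yields an $\F_q[x]$-basis of $\Lambda$ whose coordinates relative to $a_1',\dots,a_{n^2}'$ are ratios of polynomials of degree at most $\deg D_0$. Re-expressing each basis vector in the original basis $a_1,\dots,a_{n^2}$ multiplies its coordinates by $\gamma$, raising numerator degrees by at most $n^6d_C$, so the coordinates become ratios of polynomials of degree at most $\deg D_0+n^6d_C\le(2n^8+n^6+2n^2)d_C$, which is the asserted bound.

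I expect that the only real work is bookkeeping rather than any isolated deep step: one has to verify that none of the objects built during the loop — the factor module $\frac{1}{D_0}\Lambda_0/\Lambda_0$, the algebras $\B=\Lambda/g\Lambda$, their radicals and minimal ideals, and the left orders $O_l(I)$ — has dimension or coefficient degree exceeding a fixed polynomial in $n$, $d_C$ and $\log q$, and that the concluding change of basis from $a_1',\dots,a_{n^2}'$ to $a_1,\dots,a_{n^2}$, combined with the degree guarantee on Paulus's reduction, does not inflate the coordinate degrees past $(2n^8+n^6+2n^2)d_C$.
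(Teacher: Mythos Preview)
Your proposal is correct and follows essentially the same approach as the paper: clear denominators to get an almost order $\Lambda_0$, bound its discriminant $D_0$ by $(2n^8+2n^2)d_C$, represent all intermediate orders inside $\frac{1}{D_0}\Lambda_0/\Lambda_0$, run the extremality/left-order enlargement tests of Propositions~\ref{prop:extremal1} and~\ref{prop:extremal2} prime by prime via \cite{Ro1} and \cite{FR}, and finish with Paulus's reduction. Your final accounting step --- converting from the $a_i'$ back to the $a_i$ and picking up the extra $n^6d_C$ in the numerator degrees --- is exactly what yields the stated bound $(2n^8+n^6+2n^2)d_C$, and is in fact slightly more explicit than the paper's own presentation.
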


Notice that $\sum_{j=0}^d\alpha_jx^j=
x^d\sum_{j=0}^d\alpha_{d-j}\frac{1}{x^j}$.
Therefore a fraction of two polynomials in $x$
of degree at most $d$ can also be written as
a fraction of two polynomials in $\frac{1}{x}$
also of degree at most $d$. Therefore Theorem~\ref{thm:maxord-construct}
gives the following.

\begin{corollary}
\label{cor:revord-construct}
Let $\A$ and $d_C$ be as in Theorem~\ref{thm:maxord-construct}.
Then a  maximal $\F_q[\frac{1}{x}]$-order $\Delta $ can be
constructed by an f-algorithm running in time $(n+d_C+\log q)^{O(1)}$.
The output of the algorithm is an $\F_q[\frac{1}{x}]$-basis for $\Delta$
whose elements are linear combinations in the original basis of $\A$
with coefficients which are ratios of polynomials (in $x$) of degree at most
$(2n^8+n^6+2n^2)d_C$.
\end{corollary}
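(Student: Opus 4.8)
The plan is to deduce Corollary~\ref{cor:revord-construct} from Theorem~\ref{thm:maxord-construct} by the change of variable $y=\frac{1}{x}$. First I would note that $\F_q(x)=\F_q(y)$ as fields, and that the subring $\F_q[\frac{1}{x}]$ is literally the polynomial ring $\F_q[y]$; hence a maximal $\F_q[\frac{1}{x}]$-order in $\A$ is exactly a maximal $\F_q[y]$-order when $\A$ is regarded as an algebra over $\F_q(y)$. So the task reduces to running the algorithm of Theorem~\ref{thm:maxord-construct} ``in the variable $y$'', provided the input can be presented with suitably bounded degrees in $y$ and the output can be read back in the variable $x$.

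Next I would rewrite the input. Each structure constant is by hypothesis a ratio $\frac{f}{g}$ of polynomials in $x$ of degree at most $d_C$. Padding with zero coefficients so that both have formal degree $d_C$ and applying the quoted identity $\sum_{j=0}^{d}\alpha_jx^j=x^{d}\sum_{j=0}^{d}\alpha_{d-j}\frac{1}{x^j}$, one gets $f=x^{d_C}\widehat f(y)$ and $g=x^{d_C}\widehat g(y)$ with $\widehat f,\widehat g\in\F_q[y]$ of degree at most $d_C$, so that $\frac{f}{g}=\frac{\widehat f(y)}{\widehat g(y)}$. This rewriting is clearly done in polynomial time, so $\A$ as an $\F_q(y)$-algebra is given by structure constants whose numerators and denominators have degree at most $d_C$ in $y$.

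Now I would invoke Theorem~\ref{thm:maxord-construct} verbatim with $x$ replaced by $y$. The factoring oracle is an oracle for univariate polynomials over $\F_q$ and is insensitive to the name of the variable, so we obtain an f-algorithm running in time $(n+d_C+\log q)^{O(1)}$ that outputs an $\F_q[y]$-basis of a maximal $\F_q[y]$-order $\Delta$, each basis element being an $\F_q(y)$-linear combination of the original basis of $\A$ with coefficients that are ratios of polynomials in $y$ of degree at most $D:=(2n^8+n^6+2n^2)d_C$. Finally I would translate the coefficients back to $x$: applying the reversal identity in the other direction, a ratio of two polynomials in $y=\frac{1}{x}$ of degree at most $D$ equals a ratio of two polynomials in $x$ of degree at most $D$ (the common factor $x^{-D}$ cancels). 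Doing this to every coefficient of every basis element yields the stated form of the output, and the time bound is inherited unchanged.

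I do not foresee a genuine obstacle here; the corollary is essentially a relabelling of Theorem~\ref{thm:maxord-construct}. The only point that needs care is the degree bookkeeping across the two uses of the reversal identity — checking that the pad-and-reverse step introduces no blow-up and that the bound $(2n^8+n^6+2n^2)d_C$ on the degrees of the output coefficients is preserved exactly when passing from $y$ back to $x$.
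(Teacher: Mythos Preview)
Your proposal is correct and follows essentially the same approach as the paper: the paper's proof is simply the reversal identity $\sum_{j=0}^{d}\alpha_jx^j=x^{d}\sum_{j=0}^{d}\alpha_{d-j}(1/x)^j$ used to rewrite ratios of polynomials in $x$ of degree at most $d$ as ratios of polynomials in $1/x$ of degree at most $d$, and then an appeal to Theorem~\ref{thm:maxord-construct} with $1/x$ in place of $x$. Your version spells out the change of variable $y=1/x$ and the back-and-forth translation of the degree bounds with a bit more care, but the argument is the same.
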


We remark that we will actually need an
$\F_q[\frac{1}{x}]_{(\frac{1}{x})}$-basis
for a maximal $\F_q[\frac{1}{x}]_{(\frac{1}{x})}$-order.
Obviously, for this an $\F_q[\frac{1}{x}]$-basis for an
$\F_q[\frac{1}{x}]$-order $\Delta$ whose localization
at the the prime $\frac{1}{x}$ is maximal, will do. Therefore it will be
actually sufficient to apply the main steps of the order increasing
algorithm only for the prime $\frac{1}{x}$ of $\F_q[\frac 1x]$.

\section{Finding a rank 1 idempotent in $\A$}
\label{sec:findidemp}

Let $R\subseteq \F_q(x)$ be the set of rational functions
having degree at most 0 (note that the 0 polynomial has degree
$-\infty$ hence it also belongs to $R$). Thus, if $f,g \in\F_q[x]$,
$g\not= 0$, then $\frac{f}{g}\in R$ iff $\deg f\leq \deg g$.
It is easy to see that $R$ is a subring of $\F_q(x)$.
Actually $R$ is the valuation ring for the valuation $-\deg$ of $\F_q(x)$.
An alternative view is that $R=\F_q[\frac{1}{x}]_{(\frac{1}{x})}$, the localization
of the ring $\F_q[\frac{1}{x}]$ at the prime ideal
$(\frac{1}{x})$. (In fact, one readily verifies that the elements of $R$ are
precisely the functions of the form $f(\frac{1}{x})/g(\frac{1}{x})$, where $f,g$ are
univariate polynomials over $\F_q$ and the constant term of $g$
is not 0.) Thus $R$ is a discrete valuation ring, and as such,
a principal ideal ring.

The main structural result of the paper is the following theorem.
It identifies a finite subalgebra $C$ of modest size in ${\cal A}$, which contains
a primitive idempotent of ${\cal A}$.

\begin{theorem} \label{thm:mainstruct}
Let $\A\cong M_n(\F_q(x))$ and let $\Lambda$ be
a maximal $\F_q[x]$-order in $\A$.
Also, let $R$ be the subring of $\F_q(x)$ discussed above,
that is, the set of rational functions of degree
at most zero. Let $\Delta$ be a maximal $R$-order
in $\A$. Let $b_1,\ldots,b_{n^2}$
be an $\F_q[x]$-basis
of $\Lambda$, and for $j=1,\ldots,{n^2}$
let $d_j$ be the smallest integer
such that $\frac{1}{x^{d_j}}b_j\in\Delta$. Let
$d_{min}=\min\{d_j:1\leq j\leq n^2\}$
$d_{max}=\max\{d_j:1\leq j\leq n^2\}$.
Then
\begin{enumerate}
\item[(i)] For every element $a\in \Lambda\cap \Delta$
we have $a=\sum \alpha_ib_i$, where the $\alpha_i$ are polynomials
in $\F_q[x]$ of degree
at most $n^2d_{max}-d_{min}$.
\item[(ii)] $\Lambda\cap\Delta$ contains a primitive idempotent of $\A$.
\end{enumerate}
\end{theorem}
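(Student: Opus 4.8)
The plan is to work concretely with matrix representations of $\Lambda$ and $\Delta$. By Corollary~\ref{cor:max_orders} applied to $R=\F_q[x]$ (a principal ideal domain), after fixing an isomorphism $\A\cong M_n(\F_q(x))$ we may assume $\Lambda = P M_n(\F_q[x]) P^{-1}$ for some invertible $P\in M_n(\F_q(x))$; similarly $\Delta = Q M_n(R) Q^{-1}$ for some invertible $Q$, since $R$ is a discrete valuation ring and hence a PID. Conjugating everything by $P^{-1}$, we may assume outright that $\Lambda = M_n(\F_q[x])$, so that $b_1,\ldots,b_{n^2}$ is an $\F_q[x]$-basis of $M_n(\F_q[x])$, and $\Delta = Q M_n(R) Q^{-1}$. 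The point of part~(i) is then a degree bound: an element $a$ lying in both $M_n(\F_q[x])$ and $Q M_n(R) Q^{-1}$ has entries that are polynomials, and membership in $\Delta$ forces those polynomials to have bounded degree.

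For part~(i), first I would reinterpret the integers $d_j$. Since $\Delta$ is a full $R$-lattice in $\A$ and $b_j\in\Lambda\subseteq\A$, there is indeed a least $d_j\in\Z$ with $x^{-d_j}b_j\in\Delta$; equivalently $x^{-d_j}$ generates the fractional $R$-ideal $\{\,r\in\F_q(x): r b_j\in\Delta\,\}$, because $R$ is a valuation ring and every fractional ideal is $x^k$ for some $k$ (here $x$ is a uniformizer for the valuation $-\deg$, up to a unit of $R$). Now take $a=\sum_i\alpha_i b_i\in\Lambda\cap\Delta$ with $\alpha_i\in\F_q[x]$. I want to bound $\max_i\deg\alpha_i$. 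The key observation is that $x^{-d_{max}} a\in\Delta$: indeed $x^{-d_{max}}\alpha_i b_i = (\alpha_i x^{d_i - d_{max}})\,(x^{-d_i}b_i)$, and $d_i-d_{max}\le 0$ so $\alpha_i x^{d_i-d_{max}}$ has degree $\le\deg\alpha_i\le$ (something we are trying to bound) — wait, this only works once we know $\alpha_i x^{d_i-d_{max}}\in R$, i.e. $\deg\alpha_i\le d_{max}-d_i$, which is not yet available. Let me instead argue the other direction. The correct route, I expect, is: $a\in\Lambda$ so $a$ has matrix entries in $\F_q[x]$; meanwhile $a\in\Delta$ means $Q^{-1}aQ\in M_n(R)$, i.e. every entry of $Q^{-1}aQ$ has degree $\le 0$. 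Writing $a=\sum_i\alpha_i b_i$ and using that the $b_i$ form a basis of $\Lambda=M_n(\F_q[x])$, I would apply Lemma~\ref{orthogonality defect} (the orthogonality-defect inequality) to the lattice of $a$ viewed inside $\F_q(x)^{n^2}$ with the scaled basis $x^{-d_i}b_i$, whose orthogonality defect is controlled because each scaled basis vector has valuation in a bounded range and their product of valuations is pinned down by $|\det Q M_n(R)Q^{-1}|=0$. Concretely: $x^{-d_i}b_i\in\Delta$ for all $i$ but $x^{-d_i+1}b_i\notin\Delta$, which forces the $x^{-d_i}b_i$ to span $\Delta$ "tightly", giving $OD(x^{-d_1}b_1,\ldots,x^{-d_{n^2}}b_{n^2})\le (n^2-1)(d_{max}-d_{min})$ or similar. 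Feeding $|x^{-d_{max}}a|\le 0$ (from $a\in\Delta$, so $x^{-d_{max}}a$ has all entries of degree $\le 0$ — actually $a\in\Delta$ gives $|a|\le$ something; need $x^{-d_{max}}a\in\Delta$ which follows once $\deg\alpha_i\le d_{max}-d_i$... circular again). I think the clean statement is: $a\in\Delta$ implies the coordinate vector of $a$ in the $\F_q(x)$-basis $b_i$ of $\A$, namely $(\alpha_i)$, satisfies $|\alpha_i x^{-d_i}|\le C$ for an absolute constant $C$ depending on the defect; combined with $\alpha_i\in\F_q[x]$ (so $|\alpha_i|=\deg\alpha_i\ge 0$) and unwinding $d_i\le d_{max}$, this yields $\deg\alpha_i\le d_i + C\le d_{max} + C$, and a careful accounting of $C$ via Lemma~\ref{orthogonality defect} gives the stated bound $n^2 d_{max}-d_{min}$.

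For part~(ii), the plan is a counting/pigeonhole argument combined with (i). A primitive idempotent of $\A\cong M_n(\F_q(x))$ is a rank-$1$ idempotent matrix. I would produce one inside $\Lambda\cap\Delta$ as follows: pick any rank-$1$ idempotent $e_0\in\Lambda$ (for instance the matrix unit $E_{11}$ in the normalization $\Lambda=M_n(\F_q[x])$); then the set of rank-$1$ idempotents conjugate to $e_0$ by units of $\Lambda$ is large, and I want one of them to land in $\Delta$. Alternatively — and this is probably the intended approach — consider $\Delta$ in its own normalization $\Delta=Q M_n(R)Q^{-1}$: the matrix $Q E_{11} Q^{-1}$ is a rank-$1$ idempotent in $\Delta$, but maybe not in $\Lambda$. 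To fix both memberships simultaneously, use that $\Lambda\cap\Delta$ is itself a finite-dimensional $\F_q$-algebra by part~(i) (it is an $\F_q$-subspace of $\Lambda$ cut out by the degree bound, hence finite-dimensional, and it is closed under multiplication since $\Lambda$ and $\Delta$ are rings containing $1$). Then I would show $\Lambda\cap\Delta$ is not local — more precisely that $(\Lambda\cap\Delta)/\mathrm{Rad}$ has a simple component on which some rank-$1$ idempotent of $\A$ can be lifted. The cleanest way: localize $\Lambda$ at the prime $(x)$ and $\Delta$ at $(1/x)$, observe $\Lambda\cap\Delta$ surjects onto both residue algebras $\Lambda/x\Lambda\cong M_n(\F_q)$ (via Proposition~\ref{prop:local2}, since $\Lambda$ is maximal hence extremal, its reduction mod $x$ is the simple algebra $M_n(\F_q)$) and $\Delta/(1/x)\Delta\cong M_n(\F_q)$, and conclude that $\Lambda\cap\Delta$ contains an element reducing to a rank-$1$ idempotent mod $x$; lifting idempotents over the nilpotent-ish obstruction, or directly using that such an element already generates a rank-$1$ idempotent in $\A$ after Newton-type iteration within the finite algebra $\Lambda\cap\Delta$, finishes the argument.

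The main obstacle I anticipate is part~(ii): establishing that the finite algebra $C=\Lambda\cap\Delta$ genuinely contains a \emph{primitive} idempotent of $\A$ — not merely an idempotent — requires controlling how idempotents of $C$ relate to idempotents of $\A$, i.e.\ ruling out that every idempotent in $C$ has rank $>1$ in $M_n(\F_q(x))$. The resolution should come from a rank comparison: the reduction of $C$ modulo the maximal ideal at one of the two places sees $M_n(\F_q)$, where rank-$1$ idempotents certainly exist, and any idempotent of $C$ reducing to a rank-$1$ idempotent there must itself have rank $1$ in $\A$, because reduction cannot increase rank for idempotents in an order. Pinning down that last claim cleanly (reduction of an idempotent in a maximal order preserves rank) is the delicate technical point; I would isolate it as a small lemma about maximal orders localized at a prime, using Proposition~\ref{prop:local2} and the structure $\Delta_{(1/x)}=M_n(R)$ up to conjugacy.
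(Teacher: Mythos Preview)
Your approach to part~(i) has the right ingredients --- the orthogonality-defect inequality applied to the basis $b_1,\ldots,b_{n^2}$ --- but you normalize the wrong order, and this is why you keep running into circularity. The paper fixes an isomorphism $\phi$ with $\phi(\Delta)=M_n(R)$ rather than $\phi(\Lambda)=M_n(\F_q[x])$. The payoff is immediate: for any $a\in\A$, the integer $d$ with $x^{-d}a\in\Delta$ but $x^{-d+1}a\notin\Delta$ is \emph{exactly} $|\phi(a)|$, so $|\phi(b_j)|=d_j$ by definition. One then only needs $|\det\phi(\Lambda)|=0$, which follows because $\phi(\Lambda)=BM_n(\F_q[x])B^{-1}$ for some $B$, and conjugation $X\mapsto BXB^{-1}$ has determinant $1$ as a linear map on $\F_q(x)^{n^2}$. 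With that, $OD(\phi(b_1),\ldots,\phi(b_{n^2}))=\sum d_j\le n^2d_{max}$, and Lemma~\ref{orthogonality defect} gives $|\alpha_i|\le |\phi(a)|+n^2d_{max}-|\phi(b_i)|\le 0+n^2d_{max}-d_{min}$ directly. Your ``scaled basis $x^{-d_i}b_i$'' idea is a shadow of this, but without the right ambient coordinates you cannot read off the defect.

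Your approach to part~(ii) has a genuine gap. The claimed surjection $\Lambda\cap\Delta\twoheadrightarrow \Lambda/x\Lambda\cong M_n(\F_q)$ is false in general. The paper's own Remark immediately following the theorem gives a $2\times 2$ example in which $C=\Lambda\cap\Delta$ is $5$-dimensional over $\F_q$ and $C/\Rad\,C\cong\F_q\oplus\F_q$; in particular the image of $C$ in $\Lambda/x\Lambda\cong M_2(\F_q)$ is only the lower-triangular matrices, not all of $M_2(\F_q)$. So you cannot simply pull back a rank-$1$ idempotent from the residue algebra. Your rank-preservation observation (an idempotent in a maximal $\F_q[x]$-order has the same rank before and after reduction, since rank equals trace and the trace is a constant) is correct, but without the surjection you have no mechanism to produce an idempotent of $C$ whose reduction has rank~$1$.

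The paper's argument for (ii) is completely different and constructive. Working in the normalization $\phi(\Delta)=M_n(R)$, write $\phi(\Lambda)=B^{-1}M_n(\F_q[x])B$. Take reduced $\F_q[x]$-bases $\{B^{-1}u_i\}$ of $B^{-1}\F_q[x]^n$ and $\{B^Tu_j'\}$ of $B^T\F_q[x]^n$; these lattices have determinants of degree $-\delta$ and $\delta$ respectively (where $\delta=|\det B|$), so $\sum_i|B^{-1}u_i|+\sum_j|B^Tu_j'|=0$. A Hall--matching argument on the bipartite graph with edges $u_i\leftrightarrow u_j'$ whenever ${u_j'}^Tu_i\neq 0$ lets one reindex so that ${u_i'}^Tu_i\neq 0$ for all $i$. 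Pigeonhole then gives an index $i$ with $|B^{-1}u_i|+|B^Tu_i'|\le 0$, whence $Z=B^{-1}u_i{u_i'}^TB$ lies in $\phi(\Lambda)$, has all entries of degree $\le 0$ (so $Z\in M_n(R)=\phi(\Delta)$), has rank~$1$, and satisfies $Z^2=\mu Z$ with $\mu={u_i'}^Tu_i\in\F_q[x]\cap R=\F_q^\times$. Then $\mu^{-1}\phi^{-1}(Z)$ is the desired primitive idempotent in $\Lambda\cap\Delta$.
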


\begin{proof}
Let $\phi:\A\rightarrow M_n(\F_q(x))$ be an algebra isomorphism such that
$\phi(\Delta)=M_n(R)$. (Such a $\phi$ exists by
Corollary~\ref{cor:max_orders}.)
We show
that the $\F_q[x]$-lattice $\phi(\Lambda)$ in $M_n(\F_q(x))$
(the latter considered as $\F_q(x)^{n^2}$) has determinant $1$.
To see this, let $B$ be the matrix whose columns
form an $\F_q[x]$-basis for the $\F_q[x]$ lattice $\phi(\Lambda)v\subset
\F_q(x)^n$ where $v$ is a nonzero vector from $\F_q(x)^n$.
Then $\phi(\Lambda)=BM_n(\F_q[x])B^{-1}$. The claim
on the determinant follows
from that the standard lattice $\F_q[x]^{n^2}$ has determinant one
and from that the conjugation
$X\mapsto BXB^{-1}$, considered as an $\F_q(x)$-linear transformation
on $\F_q(x)^{n^2}$, has determinant one.
For the latter, notice that
multiplication by $B^{-1}$ from the right is similar to a block diagonal
matrix consisting of $n$ copies of $B^{-1}$, and hence has determinant
$(\det B^{-1})^n$, while multiplication by $B$ from the left
has determinant $(\det B)^n$.

Let $C=\Lambda\cap \Delta$. As $\Delta=\phi^{-1}(M_n(R))$,
$C$ can be characterized as the set of the elements $a$ of $\Lambda$
such that $\phi(a)$ has no entries of positive degree.
As both $\Delta$ and $\Lambda$ are $\F_q$-algebras, so is $C$.

Notice that for $0\neq a\in \A$ the degree of $\phi(a) \in M_n(\F_q (x))$
(the maximum of the degrees of  the entries of the matrix $\phi (a)$) is just the
minimal (possibly negative) integer $r$ such
that $\frac{1}{x^r}\phi(a)\in M_n(R)$, or,
equivalently, $x^{-r}a\in \Delta$.
It follows that the degrees of
the entries of $\phi(b_j)$ are bounded by $d_{max}$ and hence
the orthogonality defect of the basis
$\phi(b_1),\ldots,\phi(b_{n^2})$ for $\phi(\Lambda)$ is at most
$n^2d_{max}$, because $|\det \phi(\Lambda)|=0$. Therefore, for $a=
\sum_{j=1}^{n^2}\alpha_jb_j\in C$
Lemma~\ref{orthogonality defect}
gives that $\alpha_j$ has degree at most $n^2d_{max}-d_{min}$,
showing statement (i).

To establish statement (ii), consider an invertible  matrix $B\in M_n(\F_q(x))$
for which $\phi(\Lambda)=B^{-1}M_n(\F_q[x])B$. Let us consider
the lattice $L_1=B^{-1}\F_q[x]^n$ in $\F_q(x)^n$. The determinant
of $L_1$ is obviously $\det B^{-1}$. Let us denote by $\delta$ be the
degree of $\det B$. Let $B^{-1}u_1,\ldots,B^{-1}u_n$, with $u_i \in
\F_q[x]^n$, be an
$\F_q[x]$-basis of orthogonality defect zero
for $L_1$. One can obtain such a basis by lattice basis reduction.
Similarly, let $L_2=B^T\F_q[x]$. Then $L_2$ is an
$\F_q[x]$-lattice having determinant $\det B$.
Let $B^Tu_1',\ldots,B^Tu_n'$, with
$u_i' \in
\F_q[x]^n$, be a basis of defect zero
for $L_2$. Now we define a graph. We connect $u_i$ with $u_j'$ with an edge
if ${u_j'}^Tu_i\neq 0$. This defines a bipartite graph having
these $2n$ vectors as vertices satisfying Hall's criterion
for having a perfect matching. (A set of $s$ vectors from
$u_1,\ldots,u_n$ having less than $s$ neighbors would span
a subspace of dimension $s$ having an orthocomplement having
dimension larger than $n-s$.) By changing the order of
$u_j'$s we arrange that ${u_i'}^Tu_i\neq 0$ ($i=1,\ldots,n$).
We have
$$\sum_{j=1}^n(|B^{-1}u_j|+|B^T u_j'|)=\sum_{j=1}^n|B^{-1}
u_j|+\sum_{j=1}^n|B^T
u_{j}'|=-\delta+\delta=0,$$
 whence
there exists at least one index $i$, such that
the maximum degree of the coordinates of $B^{-1}u_i$
and the maximum degree of the coordinates of $B^Tu_i'$
add up to at most zero. Let $i$ be such an index and let $S$ resp.~$S'$
be the matrix whose first column is $u_i$ resp.~$u_i'$, and  whose
remaining entries are zero. Now $Z=B^{-1}S{S'}^TB$ is a
matrix whose entries are of degree at most zero. Also,
$Z\in \phi (\Lambda)$. Therefore $\phi^{-1}(Z)$ is in $C$.
Furthermore,
$Z$ has rank one as it is similar to $S{S'}^T=u_i{u_i'}^T$. Also,
as $(u_i{u_i'}^T)^2=\mu u_i{u_i'}^T$ where $\mu={u_i'}^Tu_i\neq 0$.
It follows that the minimal polynomial of $Z$ over $\F_q(x)$
as well as that of $\phi^{-1}(Z)$ is $X^2-\mu X$
with a nonzero $\mu\in \F_q(x)$. As $\phi^{-1}(Z)\in \Lambda\cap \Delta$,
we have $\mu\in \F_q[x]\cap R=\F_q$. Now $e=\frac{1}{\mu}\phi^{-1}(Z)$
is an idempotent in $C$ such that $\phi(e)$ has rank one.
\end{proof}
\begin{remark}
We give an example of a $C$ which is not isomorphic to a full matrix algebra over $\mathbb{F}_q(x)$. Let $\Lambda=B^{-1}M_2(\mathbb{F}_q[x])B$ where $B$ is the following matrix:
$$\begin{pmatrix}
\frac{1}{t} & 0 \\
0 & t 
\end{pmatrix}.$$
Let $\Gamma=M_2(R)$, i.e. those matrices whose degree is at most 0. Then $C=\Gamma\cap\Lambda$ is generated as an $\mathbb{F}_q$ vector space by the following matrices:
$$\begin{pmatrix}
1 & 0 \\
0 & 0
\end{pmatrix},
\begin{pmatrix}
0 & 0 \\
1 & 0
\end{pmatrix},
\begin{pmatrix}
0 & 0 \\
\frac{1}{t} & 0
\end{pmatrix},
\begin{pmatrix}
0 & 0 \\
\frac{1}{t^2} & 0
\end{pmatrix},
\begin{pmatrix}
0 & 0 \\
0 & 1
\end{pmatrix}.$$
Note that $C$ has dimension 5 over $\mathbb{F}_q$, hence it cannot be isomorphic to $M_2(\mathbb{F}_q)$. As a matter of fact, it is not even semisimple. The radical of $C$ consists of those matrices whose diagonal entries are 0. Finally, note that $C/\Rad~ C\cong \mathbb{F}_q\oplus \mathbb{F}_q$.  
\end{remark}
\medskip

For finding a primitive idempotent of $\A$ inside $C$
we can use the method described in the proof of the
following lemma.

\begin{lemma}
\label{lem:find-idemp}
Let $C$ be the finite $\F_q$-algebra from Theorem~\ref{thm:mainstruct},
and let $e_1,\dots,e_{r}$ be a complete system of orthogonal primitive idempotents in $C$.
Then there exists an $i$ such that $e_i$ is a rank 1 idempotent in $\A$.

Having a basis of $C$ at hand (a subset of $\A$), one can find such an
idempotent by a polynomial time $f$-algorithm.
\end{lemma}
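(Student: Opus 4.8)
The plan is to deduce the existence statement from Theorem~\ref{thm:mainstruct}(ii) by a local analysis of $C$ at the rank one idempotent it produces, and to obtain the algorithm from standard subroutines for finite-dimensional algebras over $\F_q$. Fix an isomorphism $\phi\colon\A\to M_n(\F_q(x))$; recall that the primitive idempotents of $M_n(\F_q(x))$ are exactly the rank one idempotents and that $\sum_{i=1}^r e_i=1_C=1_\A$. For the existence part I would begin with a rank one idempotent $e\in C$, which exists by Theorem~\ref{thm:mainstruct}(ii). The key point is that $eCe$ is a finite-dimensional $\F_q$-subalgebra of $e\A e\cong\F_q(x)$ (the isomorphism because $\phi(e)$ has rank one); a finite-dimensional $\F_q$-subalgebra of a field is a finite field extension of $\F_q$, and since $\F_q$ is algebraically closed in the rational function field $\F_q(x)$ this forces $eCe=\F_q e$, so in particular $e$ is a primitive idempotent of $C$. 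Now expand $e=e\,1_\A\,e=\sum_i e e_i e$; each term lies in $eCe=\F_q e$, so $ee_ie=c_ie$ with $c_i\in\F_q$ and $\sum_i c_i=1$, whence $c_j\neq0$ for some $j$. Setting $u=\tfrac1{c_j}ee_j$ and $w=e_je$ one checks $uw=e$ and that $wu$ is a nonzero idempotent of $e_jCe_j$; since $e_j$ is primitive, $wu=e_j$, so $e$ and $e_j$ are equivalent idempotents. Applying $\phi$, the idempotents $\phi(e_j)$ and $\phi(e)$ are equivalent in $M_n(\F_q(x))$, hence of equal rank; since $\phi(e)$ has rank one, $e_j$ is the desired rank one idempotent.

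For the algorithm, I would first turn the given basis of $C\subseteq\A$ into structure constants of $C$ over $\F_q$, by multiplying basis elements in $\A$ and re-expressing the products in the $C$-basis via linear algebra over $\F_q$. Then I would compute $\Rad C$ by the deterministic method of~\cite{Ro1}, decompose the semisimple quotient $C/\Rad C$ into its simple components using the $f$-algorithm of~\cite{FR}, pick a primitive idempotent in each component, and lift these to an orthogonal system $e_1,\dots,e_r$ of primitive idempotents of $C$ by the usual idempotent lifting along the nilpotent ideal $\Rad C$. Finally, for each $i$ I would compute $d_i=\tfrac1n\dim_{\F_q(x)}\A e_i$ by linear algebra over $\F_q(x)$ (the $\F_q(x)$-span of $\{ae_i\}$ as $a$ runs over a basis of $\A$); one has $d_i=\rank\phi(e_i)$ with no need to know $\phi$, and by the existence part some $d_i$ equals $1$, so we output that $e_i$. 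Each step runs in time polynomial in $n$, $\log q$ and the degrees of the coordinates of the given basis of $C$, the only non-deterministic ingredient being the polynomial factoring oracle used inside~\cite{Ro1} and~\cite{FR}.

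I expect the existence part to be the main obstacle: it is not enough that $C$ happens to contain a rank one idempotent, since a priori none of the prescribed $e_i$ need coincide with it. The decisive input is the arithmetic fact that $\F_q(x)$ has $\F_q$ as its field of constants, which collapses $eCe$ to $\F_q e$ and makes $e$ primitive in $C$; after that the statement reduces to the elementary fact that equivalent primitive idempotents give idempotents of equal rank (one could equally invoke Krull--Schmidt for the regular module of $C$). The algorithmic part is routine once the subroutines of~\cite{Ro1} and~\cite{FR} and idempotent lifting are in place; the one point requiring a little care is evaluating $\rank\phi(e_i)$ without access to $\phi$, which is handled by the identity $\dim_{\F_q(x)}\A e_i=n\cdot\rank\phi(e_i)$.
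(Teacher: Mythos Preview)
Your proof is correct but follows a different route from the paper. The paper's existence argument is purely ring--theoretic: starting from the rank one idempotent $x\in C$, it chooses an index $i$ with $e_ix\notin\Rad(C)$, produces a nonzero idempotent $f=e_ixy\in e_ixC$, and then uses primitivity of $e_i$ to force $fe_i=e_i$, whence $e_i=e_ixye_i$ has rank at most that of $x$, i.e.\ rank one. Your argument instead exploits the arithmetic fact that $\F_q$ is algebraically closed in $\F_q(x)$, which collapses the corner $eCe\subseteq e\A e\cong\F_q(x)$ to $\F_q e$ and thereby makes $e$ itself primitive in $C$; the decomposition $1=\sum_i e_i$ then yields an explicit equivalence $uw=e$, $wu=e_j$ with some $e_j$, giving equal rank. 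Your route is a little more conceptual and makes transparent why the statement is tied to the constant field of $\F_q(x)$; the paper's route avoids any field--theoretic input and would carry over verbatim to the situation where $e\A e$ is only known to be a division ring. On the algorithmic side the two proofs are essentially the same; the only differences are cosmetic (the paper passes through a Wedderburn--Malcev complement via \cite{Kuronya} rather than lifting idempotents through $\Rad(C)$, and it tests rank using $e_i\A$ rather than $\A e_i$). One small wording issue: ``pick a primitive idempotent in each component'' should read ``a complete orthogonal system of primitive idempotents in $C/\Rad(C)$'' (possibly several per matrix block), but that is clearly your intent.
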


\begin{proof} We note first, that the identity element of $\A$ is in $C$,
hence $C$ has idempotents.
Let $x\in C$ be an element which is a rank 1 idempotent in $\A$.
By Theorem~\ref{thm:mainstruct} such an $x$ exists. Next observe that there exists an
index $i$,  for which $e_ix$ is not in the radical of $C$. For, otherwise
$\sum _{i=1}^r e_ix=x$ would be in the
radical of $C$, which is impossible, as $x$ is not nilpotent.
Let us denote this primitive idempotent $e_i$ by $e$. Since $ex$ is not in the radical
of $C$, the right ideal $exC$ it generates in $C$ contains a nonzero idempotent $f$.
Indeed, we can consider this right ideal as an $\F_q$-algebra which is not
nilpotent.
Hence if we factor out its radical, then we have a nonzero  idempotent there
(\cite[Corollary 2.2.5]{Kirichenko}), which can be
lifted to an idempotent in $exC$ (\cite[Corollary 3.1.2]{Kirichenko}).
Write $f=exy$ with a suitable $y\in C$. We have $ef=e(exy)=e^2xy=exy=f$. We
verify now that both $fe$ and $e-fe$ are idempotent elements:
$$(fe)^2=fefe=f(ef)e=ffe=fe$$
and
$$(e-fe)^2=e^2+(fe)^2-efe-fee=e+fe-fe-fe=e-fe.$$
Furthermore, they are orthogonal:
$$fe(e-fe)=(fee)-(fe)^2=fe-(fe)^2=0$$
and
$$(e-fe)fe=(ef)e-(fe)^2=fe-(fe)^2=0.$$
Since $e$ is a primitive idempotent, one has either $fe=0$ or $fe=e$.
We show that the first case cannot happen. If $fe=0$ then $fef=0$.
However, $fef=f^2=f$ which is not zero. This implies that $fe=e$, and
$e=exye$. Since $x$ had rank 1 in $\A$, $e$ also has rank 1 in $\A$.

\smallskip

As for the computational part of the statement, first one computes a
Wedderburn-Malcev complement in $C$: a subalgebra $B$ of $C$ which is isomorphic to
$C/Rad(C)$. This can be done in deterministic polynomial time using the
algorithm of
\cite[Theorem 3.1]{Kuronya}. Then we can use for example the polynomial time
$f$-algorithms of \cite{FR} and \cite{Ro1} to compute a complete
system of primitive idempotents in $B$. To calculate
ranks, we can use the fact that for $a\in \A$ the left ideal $a\A$
has dimension $rn$ over $\F _q(x)$ where $r$ is the rank of
$a$ (considered as an
$n$ by $n$ matrix).
\end{proof}

\smallskip

We prove a bound on $d_{min}$ and $d_{max}$ in the case when
$\Lambda$ and $\Delta$ are the maximal orders constructed in
Theorem \ref{thm:maxord-construct} and Corollary \ref{cor:revord-construct},
respectively.
$\Lambda$ is an $\F_q[x]$-order and
$\Delta$ is viewed as an $R$-order here.

\begin{lemma}
\label{lem:dmax}
For the pair of maximal orders as above, we have $d_{max}\leq
(2n^8+2n^6+2n^2)d_C$ and
$d_{min}\geq -2 (2n^8+n^6+2n^2)d_C$
\end{lemma}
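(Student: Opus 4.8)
The plan is to track, through the two constructions, the degrees of the coefficients expressing an $\F_q[x]$-basis of $\Lambda$ in terms of an $\F_q[\frac1x]$-basis of $\Delta$ (and vice versa), since $d_j$ is by definition the smallest exponent with $x^{-d_j}b_j\in\Delta$. First I would recall the outputs of Theorem \ref{thm:maxord-construct} and Corollary \ref{cor:revord-construct}: $\Lambda$ has an $\F_q[x]$-basis $b_1,\dots,b_{n^2}$ whose entries, written in the original structure-constant basis $a_1,\dots,a_{n^2}$ of $\A$, are ratios of polynomials of degree at most $(2n^8+n^6+2n^2)d_C$; and $\Delta$ has an $\F_q[\frac1x]$-basis $c_1,\dots,c_{n^2}$ whose entries in the same basis $a_1,\dots,a_{n^2}$ are also ratios of polynomials in $x$ of degree at most $(2n^8+n^6+2n^2)d_C$. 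Write $e=(2n^8+n^6+2n^2)d_C$ for brevity throughout the argument.

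For the bound on $d_{max}$: fix $j$ and expand $b_j=\sum_i \lambda_i c_i$ with $\lambda_i\in\F_q(x)$; then $x^{-d}b_j\in\Delta$ exactly when every $x^{-d}\lambda_i\in\F_q[\frac1x]_{(\frac1x)}=R$, i.e. when $\deg(\text{numerator of }\lambda_i)-\deg(\text{denominator})\le d$, so $d_j$ equals the maximum over $i$ of the $x$-degree $|\lambda_i|$ of the coordinates $\lambda_i$. The coordinates $\lambda_i$ are obtained by solving a linear system whose coefficient matrix $C$ has columns the $c_i$ (entries of $x$-degree $\le e$) and whose right-hand side is $b_j$ (entries of $x$-degree $\le e$). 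Here I would apply Cramer's rule as in Lemma \ref{orthogonality defect}: $\lambda_i = \det(C_i)/\det(C)$ where $C_i$ replaces the $i$th column of $C$ by $b_j$. Since $C$ is an $n\times n$ matrix (entries of $\A$ are $n\times n$ matrices), the numerator $\det(C_i)$ is a sum of products of $n$ entries each of $x$-degree $\le e$, so its numerator has degree $\le ne$, while its denominator is a product of at most $n$ denominators, hence of degree $\le ne$; thus $|\det(C_i)|\le ne$ and similarly $|\det(C)|\le ne$ and $|1/\det(C)|\le ne$. Hence $|\lambda_i|\le 2ne$, giving $d_{max}\le 2ne = 2n(2n^8+n^6+2n^2)d_C = (2n^9+2n^7+2n^3)d_C$. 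Hmm — this overshoots the claimed $(2n^8+2n^6+2n^2)d_C$; the correct accounting must be that the entries of $\A$ are being viewed as $\F_q(x)$-vectors of length $n^2$ and the relevant change-of-basis matrix is the $n^2\times n^2$ matrix whose columns are the $c_i$ written in the $a$-basis, but more importantly that \emph{one} of the two bases can be taken with polynomial (not rational) entries after clearing a single denominator of degree $\le e$; then $\det$ of an $n^2\times n^2$ integer-entry matrix of degrees $\le e$ has degree $\le n^2 e$ and its inverse contributes $|1/\det|\le n^2 e$, but dividing out the common denominator $x^{\,?}$ already absorbed in $d_C$-counting collapses one factor, yielding $d_{max}\le 2e + (\text{lower order}) \le (2n^8+2n^6+2n^2)d_C$. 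The cleanest route, which I would actually write, is: clear denominators so that $c_1,\dots,c_{n^2}$ become polynomial-entry vectors of $x$-degree $\le e$ at the cost of introducing a scalar factor $x^{-m}$ with $m\le e$; then $b_j = x^{-m}\sum_i \mu_i c_i'$ with $\mu_i\in\F_q[x]$ of degree $\le e + n^2 e$ by Cramer over the $n^2\times n^2$ polynomial matrix, giving $|x^{-m}\mu_i|\le e + n^2 e - ? $; and then bookkeep carefully to land on $d_{max}\le (2n^8+2n^6+2n^2)d_C$.

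For the bound on $d_{min}$: this is the symmetric computation, swapping the roles — express a generator of $\Delta$, or rather bound how negative $|\lambda_i|$ can get, equivalently bound the $x$-degrees of the coordinates of the $c_i$ in the $b$-basis. Since $b_1,\dots,b_{n^2}$ is an $\F_q[x]$-basis of $\Lambda\supseteq$ (nothing a priori), one instead uses that $\Lambda$ is $\F_q[x]$-maximal with discriminant $1$ hence $\phi(\Lambda)$ has determinant $1$ (as shown inside the proof of Theorem \ref{thm:mainstruct}), so the change-of-basis matrix between $b_i$ and $a_i$ has unit determinant up to a power of $x$; combining with the degree-$\le e$ bound on the entries of the $b_i$ in the $a$-basis, Cramer's rule again bounds the $x$-degrees of the inverse coordinates, and the extra factor $n^2$ from an $n^2\times n^2$ determinant plus the factor $2$ from numerator-and-denominator give the stated $d_{min}\ge -2(2n^8+n^6+2n^2)d_C = -2e$.

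The main obstacle is purely the bookkeeping of which determinant is $n\times n$ versus $n^2\times n^2$, and how the ``clearing of a common denominator'' (already counted once in the $d_C$-degree estimate of Theorem \ref{thm:maxord-construct}) interacts with a second such clearing here, so that the constants line up to $2n^8+2n^6+2n^2$ rather than a larger polynomial in $n$. I would resolve this by fixing once and for all the isomorphism $\phi$ with $\phi(\Delta)=M_n(R)$ and writing everything in terms of the two integral matrices $B$ (with $\phi(\Lambda)=B^{-1}M_n(\F_q[x])B$) and its relation to $\phi(b_j)$, so that only $n\times n$ determinants appear, and the degree of each entry of $\phi(b_j)$ is $\le d_{max}$ by the very definition of $d_{max}$ — making the lemma essentially a tautology once the constants from Theorem \ref{thm:maxord-construct} are substituted. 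I expect the honest write-up to take the route: bound the degrees of entries of $\phi(b_j)$ directly from the construction (each $b_j$ is an $\F_q$-combination of the finitely many explicit generators, all of coefficient-degree $\le e$, and $\phi$ is realized by a fixed integral matrix of bounded degree), concluding $d_{max}\le$ (const)$\cdot e$ and symmetrically for $d_{min}$, with the constants chased to the claimed values.
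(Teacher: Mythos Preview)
Your plan never closes; you yourself notice that the Cramer's-rule computation on the $n^2\times n^2$ change-of-basis matrix between the $b_i$'s and the $c_i$'s overshoots by a factor of roughly $n^2$, and the subsequent paragraphs are sketches of possible fixes rather than an argument. The reason the bookkeeping keeps blowing up is that going directly from the $\Lambda$-basis to the $\Delta$-basis forces you through an $n^2\times n^2$ determinant, and no amount of clearing denominators will erase that extra $n^2$.

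The paper avoids this entirely by never computing the $b\leftrightarrow c$ change of basis. Instead it pivots through the \emph{original} basis $a_1,\dots,a_{n^2}$ and uses two facts that are immediate from the constructions: (1) since the initial almost $\F_q[\frac1x]$-order is $g^*(\frac1x)\cdot\langle a_i\rangle$ with $\deg g^*\le n^6d_C$, one has $x^{-n^6d_C}a_i\in\Delta$ for every $i$; and (2) each $b_j=\sum_i\alpha_i a_i$ with $|\alpha_i|\le L:=(2n^8+n^6+2n^2)d_C$ by Theorem~\ref{thm:maxord-construct}. Then $x^{-(n^6d_C+L)}b_j=\sum_i (x^{-L}\alpha_i)(x^{-n^6d_C}a_i)$ is a sum of $R$-multiples of elements of $\Delta$, hence lies in $\Delta$, giving $d_{max}\le n^6d_C+L=(2n^8+2n^6+2n^2)d_C$ on the nose. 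For $d_{min}$ the argument is even shorter and does not use the determinant-$1$ fact you invoke: the $a_i$-coefficients of any element of $\Lambda$ have valuation $\ge -L$, while those of any element of $\Delta$ have valuation $\le L$ (Corollary~\ref{cor:revord-construct}); so if $d<-2L$ then $x^{-d}b_j$ has some $a_i$-coefficient of valuation $>L$ and cannot lie in $\Delta$, whence $d_{min}\ge -2L$.
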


\begin{proof} For short, we write $L=(2n^8+n^6+2n^2)d_C$.
Let $a_1,\dots, a_{n^2}$ be the input basis of  $\A$ we use
in the algorithms of
Theorem \ref{thm:maxord-construct} and Corollary \ref{cor:revord-construct}.
We know that the numerators and denominators of the structure constants
for $\A$ are polynomials of degree at most $d_C$.  Let $g^*(1/x)$ be the
smallest
common denominator of the structure constants when written as rational
functions in $\frac 1x$. The degree of $g^*$ is at most $n^6d_C$.
We know that the $g^*(1/x)a_i$ are in the starting almost $\F_q [\frac 1x]$-order
$\Delta_0$, hence they are also in $\Delta$.
Also, one can then write
$$ g^*\left( \frac 1x \right)= \frac{1}{x^\ell}h\left( \frac 1x \right) $$
where
$h(y)\in \F _q[y]$ and $h(0)\not= 0$.  We have here $\ell\leq n^6d_C$.
We claim that $\frac{1}{x^{n^6d_C}}a_i\in \Delta $ hold for every $i$.
Indeed
$$ \frac{1}{x^{n^6d_C}}a_i= \frac{\frac{1}{x^{n^6d_C-\ell}}}{h(\frac 1x)}
\cdot
g^*\left( \frac 1x\right) a_i.$$
Here the first factor is in $R$, the second is in $\Delta$, thus giving the
claim.

We know from Theorem  \ref{thm:maxord-construct}
that every basis element $b_j$ of $\Lambda$ is a linear combination of the
$a_i$ with coefficients $\alpha _i\in \F_q(x)$, and the numerator as well as
the denominator of $\alpha _i$ has degree  at most $L$. We claim now
that  $\frac{1}{x^{n^6d_C+ L}}b_j\in \Delta$. Indeed, we have
$$ \frac{1}{x^{n^6d_C +L}}\alpha_i a_i=
\left( \frac{1}{x^{n^6d_C}}a_i \right) \cdot
\left( \frac{1}{x^{ L}}\alpha_i\right).$$
The first factor is in $\Delta$, the second is in $R$ and the upper bound
follows.

\smallskip

As for $d_{min}$, we observe that the coefficients for the elements of
$\Lambda$ in the basis $\{a_i\}$ are rational functions of degree at
least $-L$ (Theorem \ref{thm:maxord-construct}).
Similarly, by Corollary \ref{cor:revord-construct}
the coefficients for the elements of $\Delta$
in the basis $\{a_i\}$ are rational
functions of degree at most $L$. It follows that for $d< -2L$ the element
$ \frac{1}{x^d}b_j$ can not be in $\Delta$, as the coefficient
$ \frac{1}{x^d}\alpha _i$ has degree at least $L+1$.
\end{proof}

\medskip

Now we turn to the algorithmic task of finding (an  $\F_q$-basis of) $C$.

\begin{lemma}
\label{lem:find-finite-alg}
Let $b_1,\ldots,b_{n^2}$ be the  $\F_q[x]$-basis
of  $\Lambda$ constructed by the algorithm of Theorem
\ref{thm:maxord-construct}, and let
$u_1,\ldots ,u_{n^2}$ be the $R$-basis of $\Delta$ constructed by the method
of Corollary \ref{cor:revord-construct}.
From these data we can construct an $F_q$-basis of $C$ in deterministic
polynomial time.
\end{lemma}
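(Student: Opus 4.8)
The plan is to realize $C=\Lambda\cap\Delta$ as the kernel of an explicitly computable $\F_q$-linear map, and then to read off a basis by linear algebra over $\F_q$. First I would record the key finiteness fact coming from Theorem \ref{thm:mainstruct}(i) together with Lemma \ref{lem:dmax}: every element of $C$ is of the form $\sum_{j=1}^{n^2}\alpha_j b_j$ with $\alpha_j\in\F_q[x]$ of degree at most $n^2 d_{max}-d_{min}$, hence at most $D:=(2n^8+2n^6+2n^2)d_C+2(2n^8+n^6+2n^2)d_C$, a quantity polynomial in $n$ and $d_C$. Thus $C$ sits inside the finite-dimensional $\F_q$-space $V$ spanned by the $(D+1)n^2$ elements $x^k b_j$ ($0\le k\le D$, $1\le j\le n^2$), and it suffices to intersect $V$ with $\Delta$.

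Next I would set up the membership test for $\Delta$. Write each basis element $b_j$, and hence each $x^k b_j$, in terms of the input basis $a_1,\dots,a_{n^2}$ of $\A$; by Theorem \ref{thm:maxord-construct} the coefficients are ratios of polynomials of degree at most $L=(2n^8+n^6+2n^2)d_C$, so the coordinates of every vector in $V$, expressed in the $a_i$, are rational functions of bounded degree. On the other hand $\Delta=\bigoplus_i R u_i$, and $R=\F_q[\tfrac1x]_{(1/x)}$ is the valuation ring for $-\deg$. So for $v\in V$, writing $v=\sum_i \rho_i(v)\, u_i$ with $\rho_i(v)\in\F_q(x)$ (this requires inverting the change-of-basis matrix from $\{a_i\}$ to $\{u_i\}$, which is invertible over $\F_q(x)$ and has polynomial-degree entries), we have $v\in\Delta$ iff $|\rho_i(v)|\le 0$ for all $i$, i.e. iff in the $1/x$-adic expansion of each $\rho_i(v)$ the coefficient of every negative power $x^{m}$, $m\ge 1$, vanishes. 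Since the $\rho_i(v)$ have denominators of controlled degree (again polynomial in $n,d_C$) and $v\mapsto\rho_i(v)$ is $\F_q$-linear, each such coefficient is an $\F_q$-linear functional on $V$ with explicitly computable matrix, and only boundedly many of them (those indexed by $m\ge 1$ up to the degree bound) can be nonzero. Intersecting the kernels of all these functionals gives $C$, and a basis of the kernel is computed by Gaussian elimination over $\F_q$.

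The main obstacle is bookkeeping the degree bounds so that the linear system one writes down is finite and of polynomially bounded size: one must check that the coordinates $\rho_i(v)$ genuinely have numerator and denominator degrees bounded by a polynomial in $n$ and $d_C$ (so that ``$|\rho_i(v)|\le 0$'' is equivalent to the vanishing of finitely many, polynomially many, Laurent coefficients), and that the change-of-basis matrices between $\{a_i\}$, $\{b_j\}$ and $\{u_i\}$ and their inverses all have polynomially bounded degree — the latter following from Cramer's rule together with the degree bounds in Theorem \ref{thm:maxord-construct} and Corollary \ref{cor:revord-construct}. Once these bounds are in place, every step (expanding in the $a_i$, inverting a matrix over $\F_q(x)$, extracting Laurent coefficients, solving a homogeneous linear system over $\F_q$) is a standard polynomial-time computation, and assembling them yields the claimed deterministic polynomial-time algorithm producing an $\F_q$-basis of $C$.
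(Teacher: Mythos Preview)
Your approach is correct and complete in outline, but it is genuinely different from the paper's proof. The paper does not set up a linear system at all. Instead, it writes $\Lambda$ in coordinates with respect to the $R$-basis $u_1,\ldots,u_{n^2}$ of $\Delta$ (so that membership in $\Delta$ becomes the condition $|\cdot|\le 0$ on the coordinate vector), and then applies Lenstra's lattice reduction to the resulting $\F_q[x]$-lattice. For a reduced basis $c_1,\ldots,c_{n^2}$ (orthogonality defect zero), Lemma~\ref{orthogonality defect} immediately gives that $|\sum\beta_ic_i|\le 0$ is equivalent to $|\beta_i|\le -|c_i|$ for each $i$ separately, so an explicit $\F_q$-basis of $C$ is simply the set $\{x^jc_i : 0\le j\le -|c_i|\}$.

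Your route trades lattice reduction for brute-force linear algebra: you use Theorem~\ref{thm:mainstruct}(i) with Lemma~\ref{lem:dmax} to confine $C$ inside a finite $\F_q$-space $V$ of polynomial dimension, then cut $V$ down by the $\F_q$-linear constraints encoding $|\rho_i(\cdot)|\le 0$. This is sound, but note two small slips: the degree bound from Theorem~\ref{thm:mainstruct}(i) is $n^2d_{max}-d_{min}$, so your $D$ should carry a factor $n^2$ on the first term; and the Laurent condition should be that the coefficients of the \emph{positive} powers $x^m$, $m\ge 1$, vanish (equivalently, clear a common denominator $G\in\F_q[x]$ and demand $\deg(G\rho_i(v))\le\deg G$, which is the cleanest way to make your ``finitely many functionals'' explicit). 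The paper's argument is shorter and yields the basis of $C$ directly, while yours avoids invoking the reduction algorithm but leans on the a~priori degree bounds (which in the paper are used only to certify that the output is of polynomial size, not to find it).
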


\begin{proof}
We consider the elements of $\A$ as  vectors in the basis
$u_1,\ldots ,u_{n^2}$. This way the elements   of $ \A$ can be viewed as
vectors from $\mathbb{F}_q(x)^{n^2}$ in the usual way:
an element $a\in\A$ with  $a=\sum_{j=1}^{n^2}\alpha_ju_j$
is represented by the vector
$$ (\alpha_1,\dots,\alpha_{n^2})^T\in \mathbb{F}_q(x)^{n^2}.$$
Observe, that a vector as above represents an element of $\Delta$
iff $|\alpha _i|\leq 0$ holds for every $i$.
Consider now the vectors $b_i'   \in \mathbb{F}_q(x)^{n^2} $ representing
the basis elements $b_i$ of
$\Lambda$. They generate a full $\F_q [x]$-lattice (corresponding to
$\Lambda$)  in
$\mathbb{F}_q(x)^{n^2}$.
We next compute a reduced basis $c_1,\dots, c_{n^2}$ of this lattice.
An element
$$ a=\sum_{i=1}^{n^2}\beta _ic_i  ~~~ {\rm with}~ \beta_i \in \F_q[x]
~{\rm for}~i=1,\ldots, n^2 $$
represents an element of $C=\Lambda\cap\Delta $ iff $|a|\leq 0$. We
claim that this latter condition is
equivalent to the set of inequalities
$$ |\beta _i c_i|=|\beta_i|+|c_i|\leq 0, ~~~ i=1,\ldots , n^2.$$
Indeed, as the $\{c_i\}$ is a reduced $\F_q[x]$-basis, from
Lemma \ref{orthogonality defect} we obtain that
\begin{equation}
|\beta_i|\leq |a|+ OD(c_1,\dots,c_{n^2})-|c_i|=|a|-|c_i|
\end{equation}
for every $i$, hence if $|a|\leq 0$ then $|\beta _ic_i|\leq 0$ for every $i$.
Conversely,  $|\beta _ic_i|\leq 0$ for every $i$ obviously
implies that $|a|\leq 0$. We conclude that the elements $x^j c_i$ such that
$1\leq i\leq n^2 $ and $j$ is a natural number  with $j+|c_i|\leq 0$
form an $\F_q$-basis of $C$. Theorem~\ref{thm:mainstruct} and
Lemma~\ref{lem:dmax} provide
a polynomial upper bound for the dimension of $C$ over
$\F_q$, and hence on the number of such elements $x^jc_i$.\footnote{A
polynomial bound for the dimension of $C$ follows also
simply from the polynomiality of the algorithm described here.}

The algorithmic subtasks involved here: change of basis from the input basis to
the basis $\{u_i\}$, and the lattice basis reduction both can be done in
deterministic polynomial time, hence  from $\Lambda$ and $\Delta$ we obtain
$C$ in polynomial time.
\end{proof}

\medskip

The main steps of our algorithm for finding a rank 1 idempotent element
$e\in \A$ are as follows.

\hrulefill

\begin{enumerate}

\item Construct a maximal $\mathbb{F}_q[x]$-order $\Lambda$ and a
maximal $R$-order $\Delta$, by the f-polynomial time algorithms of
Theorem \ref{thm:maxord-construct},
and Corollary \ref{cor:revord-construct}, respectively.

\item Compute an $\mathbb{F}_q$-basis of the finite algebra
$C=\Lambda\cap\Delta$ using the polynomial time algorithm
of Lemma~\ref{lem:find-finite-alg}.

\item With the polynomial time f-algorithm of Lemma~\ref{lem:find-idemp} find a
complete system $e_1,\ldots , e_r$ of orthogonal
primitive idempotents in $C$, and then select an $e_i$ among them which
has rank 1 in $\A$. Finally output this element $e=e_i$.
\end{enumerate}

\hrulefill

\medskip

\noindent
{\em Proof of Theorem~\ref{thm:Main}.}
The correctness and the timing for the first Step follows immediately from
Theorem~\ref{thm:maxord-construct}, and
Corollary~\ref{cor:revord-construct}.
These, and Lemma~\ref{lem:dmax} imply that $C$
admits polynomial size description. Then Lemma \ref{lem:find-finite-alg}
settles Step 2. Correctness and polynomiality for the last Step is
provided by Lemma~\ref{lem:find-idemp}.
$\Box$

\medskip

\paragraph*{Acknowledgement}
Research supported by the Hungarian National Research, Development and Innovation Office - NKFIH, Grants NK105645 and K115288. The authors are grateful to an anonymous referee for helpful remarks and suggestions.


\begin{thebibliography}{MM}


\bibitem{Ajtai}
 M. Ajtai: The shortest vector problem in $L_2$ is NP-hard for
randomized reductions, Proceedings of the 30th annual ACM symposium on
Theory of computing (1998), Dallas, Texas, United States, ACM. pp. 10-19.

\bibitem{Berlekamp}
E.R. Berlekamp:
Factoring polynomials over finite fields,
Bell System Technical Journal 46 (1967), pp. 1853-1859.

\bibitem{CZ}
D.G. Cantor, H. Zassenhaus:
A new algorithm for  factoring polynomials
over finite fields,
Mathematics of Computation 36 (1981), pp. 587-592.

\bibitem{Cremona1}
Explicit $n$-descent on elliptic curves I. Algebra,
Journal f\"ur die reine und angewandte Mathematik, Vol. 615 (2008),pp.  121-155.

\bibitem{Cremona2}
Explicit $n$-descent on elliptic curves II. Geometry,
Journal f\"ur die reine und angewandte Mathematik 632 (2009), pp. 63--84.

\bibitem{Cremona3}
Explicit $n$-descent on elliptic curves III. Algorithms,
Mathematics of Computation 84 No.292 (2015), 895-922.

\bibitem{Kirichenko}
Y. Drozd, V.V. Kirichenko: Finite dimensional algebras,
Vyshcha Shkola, Kiev, 1980.

\bibitem{FR}
K. Friedl, L. R\'onyai: Polynomial time solutions of some problems in computational algebra,
Proceedings of the 17th annual ACM symposium on Theory of computing (1985), Providence, Rhode Island, United States, ACM. pp. 153-162.

\bibitem{GZ03}
M. Giesbrecht, Y. Zhang:
Factoring and decomposing Ore polynomials over $\F_q(T)$,
Proceedingss of the 2003 International Symposium on Symbolic and Algebraic
Computation (ISSAC2003), New York, NY, United States, ACM. pp. 127-134.

\bibitem{GTLN}
J. G\'omez-Torrecillas, F. J. Lobillo, G. Navarro:
Factoring Ore polynomials over $\F_q(t)$ is difficult,
(2015) Preprint arXiv:1505.07252.

\bibitem{Kuronya}
W.A. de Graaf, G. Ivanyos, A. K\"uronya, L. R\'onyai: Computing Levi decompositions, Applicable Algebra in Engineering, Communication and Computing 8 (1997),
pp. 291-304.

\bibitem{Ithesis}
G. Ivanyos: Algorithms for algebras over global field, Ph. D. thesis, Hungarian Academy of Sciences (1996), \url{http://real-d.mtak.hu/261/1/Ivanyos_Gabor.pdf}.

\bibitem{IKRS}
G. Ivanyos, M. Karpinski, L. R\'onyai, N. Saxena:
Trading GRH for algebra: algorithms for factoring polynomials and related
  structures,
Matematics of Computation 81 (2012), pp. 493-531.



\bibitem{ILR}
G. Ivanyos, \'A. Lelkes, L. R\'onyai: Improved algorithms
for splitting full matrix algebras, JP Journal of Algebra, Number
Theory and Applications 28 (2013),
pp. 141-156.



\bibitem{RLIR} G. Ivanyos, L. R\'onyai: On the complexity of finding maximal orders in semisimple algebras over $\Q$, Comput. complexity 3 (1993),
pp. 245-261.

\bibitem{IRS}
G. Ivanyos, L. R\'onyai, J. Schicho: Splitting full matrix algebras over algebraic number fields, Journal of Algebra 354 (2012),
pp. 211-223.

\bibitem{IRSz}
G. Ivanyos, L. R\'onyai, \'A. Sz\'ant\'o: Decomposition of algebras over $\mathbb{F}_q(x_1,... ,x_m)$, Applicable Algebra in Engineering, Communication and Computing 5 (1994),
pp. 71-90.

\bibitem {Lenstra}
A.K. Lenstra: Factoring multivariate polynomials over finite fields, Journal of Computer and System Sciences 30 (2) (1985),
pp. 235-248.

\bibitem{Paulus}
S. Paulus: Lattice basis reduction in function fields,
J. Buhler (Ed.), Proceedings of the Third Symposium on Algorithmic Number
Theory, Portland, Oregon, United States: ANTS-III, Springer LNCS 1423 (1998), pp.
567-575.

\bibitem{Reiner} I. Reiner: Maximal orders, Academic Press, 1975.

\bibitem{Ro1}
L. R\'onyai: Computing the structure of finite algebras, Journal of Symbolic Computation 9 (1990),
pp. 355-373.

\bibitem{Vigneras}
M-F. Vign\'eras: Arithm\'etique des Alg\`{e}bres de Quaternions,
Springer, LNM 800, 1980.



\end{thebibliography}
\end{document}